 \def\NZQ{\mathbb}               
 \def\FF{{\NZQ F}}
 \def\frk{\mathfrak}               
 \def\mm{{\frk m}}
 \def\nn{{\frk n}}
 \def\G{{\mathcal G}}
 \def\opn#1#2{\def#1{\operatorname{#2}}} 
 \opn\chara{char} \opn\length{\ell} \opn\pd{pd} \opn\rk{rk}
 \opn\projdim{proj\,dim} \opn\injdim{inj\,dim} \opn\rank{rank}
 \opn\depth{depth} \opn\grade{grade} \opn\height{height}
 \opn\embdim{emb\,dim} \opn\codim{codim}
 \opn\Tr{Tr} \opn\bigrank{big\,rank}
 \opn\superheight{superheight}\opn\lcm{lcm}
 \opn\trdeg{tr\,deg}
 \opn\reg{reg} \opn\lreg{lreg} \opn\ini{in} \opn\lpd{lpd}
 \opn\size{size} \opn\sdepth{sdepth}
 \opn\link{link}\opn\fdepth{fdepth}\opn\lex{lex}
 \opn\tr{tr}
 \opn\div{div} \opn\Div{Div} \opn\cl{cl} \opn\Cl{Cl}
 \opn\Spec{Spec} \opn\Supp{Supp} \opn\supp{supp} \opn\Sing{Sing}
 \opn\Ass{Ass} \opn\Min{Min}\opn\Mon{Mon}
 \opn\Ann{Ann} \opn\Rad{Rad} \opn\Soc{Soc}
 \opn\Im{Im} \opn\Ker{Ker} \opn\Coker{Coker} \opn\Am{Am}
 \opn\Hom{Hom} \opn\Tor{Tor} \opn\Ext{Ext} \opn\End{End}
 \opn\Aut{Aut} \opn\id{id}
 \opn\nat{nat}
 \opn\pff{pf}
 \opn\Pf{Pf} \opn\GL{GL} \opn\SL{SL} \opn\mod{mod} \opn\ord{ord}
 \opn\Gin{Gin} \opn\Hilb{Hilb}\opn\sort{sort}
 \opn\PF{PF}\opn\Ap{Ap}
 \opn\aff{aff} \opn
\opn\relint{relint} \opn\st{st}
 \opn\lk{lk} \opn\cn{cn} \opn\core{core} \opn\vol{vol}  \opn\inp{inp} \opn\nilpot{nilpot}
 \opn\link{link} \opn\star{star}\opn\lex{lex}\opn\set{set}
 \opn\width{wd}
 \opn\Fr{F}
 \opn\QF{QF}
 \opn\G{G}
 \opn\type{type}\opn\res{res}
 \opn\gr{gr}
 \def\pot#1#2{#1[\kern-0.28ex[#2]\kern-0.28ex]}
 \opn\dirlim{\underrightarrow{\lim}}
 \opn\inivlim{\underleftarrow{\lim}}
 \let\sect=\cap
 \let\dirsum=\oplus
 \let\tensor=\otimes
 \let\Union=\bigcup
 \let\to=\rightarrow
 \def\Implies{\ifmmode\Longrightarrow \else
         \unskip${}\Longrightarrow{}$\ignorespaces\fi}
 \def\implies{\ifmmode\Rightarrow \else
         \unskip${}\Rightarrow{}$\ignorespaces\fi}
 \def\iff{\ifmmode\Longleftrightarrow \else
         \unskip${}\Longleftrightarrow{}$\ignorespaces\fi}
 \newtheorem{Theorem}{Theorem}[section]
 \newtheorem{Lemma}[Theorem]{Lemma}
 \newtheorem{Corollary}[Theorem]{Corollary}
 \newtheorem{Proposition}[Theorem]{Proposition}
 \newtheorem{Remark}[Theorem]{Remark}
 \let\epsilon\varepsilon
 \let\kappa=\varkappa
 \def\qed{\ifhmode\textqed\fi
       \ifmmode\ifinner\quad\qedsymbol\else\dispqed\fi\fi}
 \def\textqed{\unskip\nobreak\penalty50
        \hskip2em\hbox{}\nobreak\hfil\qedsymbol
        \parfillskip=0pt \finalhyphendemerits=0}
 \def\dispqed{\rlap{\qquad\qedsymbol}}
 \opn\dis{dis}
 \def\pnt{{\raise0.5mm\hbox{\large\bf.}}}
 \opn\Lex{Lex}
\begin{document}
\title {Koszul cycles and Golod rings}

\author {J\"urgen Herzog and Rasoul Ahangari Maleki}

\address{J\"urgen Herzog, Fachbereich Mathematik, Universit\"at Duisburg-Essen, Campus Essen, 45117
Essen, Germany} \email{juergen.herzog@uni-essen.de}

\address{School of Mathematics, Institute for Research in Fundamental
Sciences (IPM), P.O. Box: 19395-5746, Tehran, Iran}
\email{rahangari@ipm.ir, rasoulahangari@gmail.com}

\dedicatory{ }

\begin{abstract}
Let $S$ be the power series ring or the polynomial ring over a field $K$ in the variables $x_1,\ldots,x_n$, and let $R=S/I$,  where $I$ is proper ideal which we assume to be graded if $S$ is the polynomial ring. We give an explicit description of the  cycles of the Koszul complex whose homology classes generate the Koszul homology of $R=S/I$ with respect to $x_1,\ldots,x_n$.  The description is given in terms of the data of the free $S$-resolution of $R$. The result is  used to determine classes of Golod ideals, among them proper ordinary  powers and proper symbolic powers of monomial ideals. Our theory is also applied to stretched local rings.
\end{abstract}

\thanks{}

\subjclass[2010]{}

\keywords{}

\maketitle

\section*{Introduction}

Let $S$ be the power series ring or the polynomial ring over a field
$K$ in the variables $x_1,\ldots,x_n$, and let $I\subset S$  be a
proper ideal of $S$, which we assume to be a graded ideal, if $S$ is
the polynomial ring.  Consider a finitely generated $R=S/I$-module
$M$.
 The formal power series ring $P^R_M(t)=\sum_{i\geq 0}\dim_K\Tor_i^R(K,M)t^i$ is called the Poincar\'{e} series of $M$. Since $M$ is also an $S$-module, we may as well consider the Poincar\'{e} series  $P^S_M(t)$, similarly defined.  Note that $P^S_M(t)$  is a polynomial, since $S$ is regular. By a result of  Serre \cite{JPS}, there is a coefficientwise inequality
\[
P^R_K(t)\leq \frac{(1+t)^n}{1-t(P^S_R(t)-1)}.
\]
The ring $R$  (or $I$ itself) is called Golod, if equality holds. In
this case the Poincar\'{e}  series $P^R_K(t)$ is a rational
function, which in general for the Poincar\'{e} series of the
residue field $K$ is not always the case,  see \cite{An}.
Interestingly, over Golod rings we have rationality not only for
$P^R_K(t)$ but also for Poincar\'{e} series of all finitely
generated $R$-modules.

In Section 1 of this note we give a canonical and explicit
description of the cycles of the Koszul complex whose homology
classes generate the Koszul homology of $R$ with respect to
$x_1,\ldots,x_n$, see Theorem~\ref{cycle}. The description is given
in terms of the data of the free $S$-resolution of $R$, and  allows
us to identify interesting classes of Golod ideals.

The same strategy, namely to give a nice description of Koszul cycles,  has been applied by the first author and Huneke \cite{HH} to show,  among other results,  that in the polynomial case,  the powers $I^k$ of $I$  are Golod for all $k\geq 2$, provided the characteristic of $K$ is zero. In that result,  the annoying assumption that the characteristic of $K$ should be  zero, arises from the fact that if the authors use the  result from \cite{H} which says that $\chara(K)=0$,  then the  desired Koszul cycles can be described in terms of Jacobians  derived from the maps in the graded minimal free $S$-resolution of $R$.  To avoid this drawback,  we choose a different description of the Koszul cycles which can be given for  any characteristic  of the  base field,  and only depends on the given order of the variables.

Apart from an explicit description of Koszul cycles, our approach to  prove Golodness for certain classes of ideals and rings is based on the  Golod criterion \cite[Proposition 1.3]{RA}, due to the second author of this paper. He showed that if  $I\subset J\subset S$ are  ideals with  $J^2\subset I$ and such that the natural maps $\Tor_i^S(K,S/I)\to \Tor_i^S(K,S/J)$
are zero for all $i\geq 1$, then $I$ is Golod. As an easy consequence of our description of the Koszul cycles it is shown in Section~2 that   $\Tor_i^S(K,S/I)\to \Tor_i^S(K,S/d(I))$ is the zero map for all $i\geq 1$,  where $d(I)$ is the ideal generated by the elements $d^i(f_j)$  for $i=1,\ldots,n$ and  $f_1,\ldots, f_m$  a system of generators of $I$. The operator $d^i$ is defined by
\[
d^i(f)=(f(0,\ldots,0,x_i,\ldots,x_n)-f(0,\ldots,0,x_{i+1},\ldots,x_n))/x_i \quad \text{for} f\in \nn.
\]
In combination with the above Golod criterion we then obtain that $I$ is a  Golod ideal, if $d(I)^2\subset I$. If this is the case we say that $I$ is $d$-Golod. The operators $d^i$ depend on the order of the variables. If $I$ happens to be $d$-Golod after a permutation $\sigma$ of the variables, then  we say that  $I$  is $d_\sigma$-Golod, and we  call $I$ strongly $d$-Golod, if it is $d_\sigma$-Golod for any permutation $\sigma$.

As one of the main applications of this approach we  obtains (that
for a monomial ideal $I$, all  proper ordinary  powers, saturated
powers or  symbolic powers of $I$ are Golod. The same holds true for
the integral closures $\overline{I^k}$ of the powers $I^k$ for
$k\geq 2$, see Theorem~\ref{golodlist} and
Proposition~\ref{integral}. The same results can be found in
\cite{HH} for graded ideals,  but under the additional assumption
that $\chara(K)=0$. Here we have no assumptions on the the
characteristic but we  prove these results only for monomial ideal.
However, with our methods,    a new class of non-monomial Golod
ideals in the formal power series ring  is  detected, see
Proposition~\ref{new}.

As a last application of the techniques presented in this paper we have a result of more general nature. In Theorem~\ref{str} it is shown that if $R$ is a stretched local ring or a standard graded stretched $K$-algebra, then $R$ is Golod, if one of the following conditions is satisfied:
(i) $R$ is standard graded, (ii) $R$ is not Artinian, (iii) $R$ is Artinian and the socle dimension of $R$ coincides with its embedding dimension.

The result that $R$ is Golod,  if (iii) is satisfied,  has been shown in the recent paper \cite{S}.  Our proof of this case can be deduced without any big efforts from the case that $R$ is standard graded. The latter case is accessible to our theory,  since after a suitable extension of the base field, the  defining ideal of a standard graded stretched $K$-algebra $R$ turns out to be $d_\sigma$-Golod  for a suitable permutation of the variables.

\section{A description of the Koszul cycles}

Let $K$ be a field, and let $S_r$  stand for  power series ring $K[[x_r,\ldots,x_n]]$ or the
polynomial ring $K[x_r,\ldots,x_n]$ over  $K$. For $S_1$ we simply write $S$. Since $S_r$ is naturally embedded into $S$ we may view any element in $S_r$ also as an element in $S$. We denote by
$\nn=(x_1,\ldots,x_n)$  the maximal (resp.\ the graded) maximal
ideal of $S$.

\indent Let $f\in\nn$. Then
\[
f= \sum_{(r_1,\ldots,r_n)\in \mathbb{N}_0^n}\alpha_{r_1,\ldots,r_n}x_1^{r_1}\cdots x_n^{r_n}, \]
where the coefficients $\alpha_{r_1,\ldots,r_n}$ belong to $K$.

For $f\in \nn$ and $r=1,\ldots,n$, we set
\begin{eqnarray}
\label{quasipartial}
 d^r(f)=\frac{f(0,\ldots,0,x_r,\ldots,x_n)-f(0,\ldots,0,x_{r+1},\ldots,x_n)}{x_r}.
\end{eqnarray}
Then the following rules hold:
\begin{enumerate}
\item[(i)] $f=d^1(f)x_1+\cdots+d^n(f)x_n$, and
\item[(ii)]  $d^r(f)\in S_r$ for $1,\ldots, n$.
\end{enumerate}
Note that the operators $d^r$ are uniquely determined by (i) and (ii).

\medskip
The following example demonstrates this definition: let
$S=K[[x_1,x_2,x_3,x_4]]$ and $f=x_1^2x_3+x_1x_2^3 + x_2^2x_3^3 +
x_3^2x_4$.  Then $d^1(f)=x_1x_3+x_2^3,$ $d^2(f)=x_2x_3^3,
d^3(f)=x_3x_4$ and $d^4(f)=0$.

\medskip
Of course the definition of the $d^r(f)$ depend on the order of the variables.
Like partial derivatives, the operators $d_i :S \to S_i$ are $K$-linear maps, and there is a product rule which is however less simple than that for partial derivatives. Indeed one has

\begin{Lemma}\label{prod}
Let $f,g\in\nn$ and $r$ be an integer with $1\leq r\leq n$. Then
\begin{enumerate}
\item[{\em (i)}] $d^r$ is a $K$-linear map and so $d^r(f+g)=d^r(f)+d^r(g)$;\\
\item[{\em (ii)}] $d^r(fg)=d^r(f)d^r(g)x_r+\sum_{i>r} (d^r(f)d^i(g)+d^r(g)d^i(f))x_i$.
\end{enumerate}
\end{Lemma}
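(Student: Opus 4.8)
The plan is to handle (i) quickly and then reduce (ii) to a one-line expansion by introducing the substitution homomorphisms underlying the operators $d^r$. For $1\leq r\leq n+1$ let $\phi_r\colon S\to S_r$ denote the $K$-algebra map that substitutes $0$ for each of $x_1,\ldots,x_{r-1}$ and fixes $x_r,\ldots,x_n$, so that $\phi_r(f)=f(0,\ldots,0,x_r,\ldots,x_n)$, with $\phi_1=\id$ and $\phi_{n+1}(f)=0$ for $f\in\nn$. Each $\phi_r$ is a ring homomorphism (continuous in the power series case), and in this language
\[
d^r(f)=\frac{\phi_r(f)-\phi_{r+1}(f)}{x_r}.
\]
Since every $\phi_r$ is $K$-linear, so is $f\mapsto\phi_r(f)-\phi_{r+1}(f)$, and dividing by $x_r$ preserves linearity; this gives (i). The numerator really is divisible by $x_r$, because $\phi_r(f)$ reduces to $\phi_{r+1}(f)$ upon setting $x_r=0$, so the difference vanishes there.

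For (ii), I would rewrite the defining relation as $\phi_r(f)=\phi_{r+1}(f)+x_r\,d^r(f)$, and likewise for $g$. Using that $\phi_r$ is multiplicative,
\[
d^r(fg)=\frac{\phi_r(f)\phi_r(g)-\phi_{r+1}(f)\phi_{r+1}(g)}{x_r}.
\]
Substituting the two relations and expanding, the $\phi_{r+1}(f)\phi_{r+1}(g)$ terms cancel, one factor of $x_r$ divides out, and what remains is
\[
d^r(fg)=x_r\,d^r(f)\,d^r(g)+d^r(f)\,\phi_{r+1}(g)+d^r(g)\,\phi_{r+1}(f).
\]

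The only point requiring a small observation is to recognize $\phi_{r+1}(f)$ and $\phi_{r+1}(g)$ in terms of the $d^i$. The identity I need is $\phi_{r+1}(h)=\sum_{i>r}x_i\,d^i(h)$ for every $h\in\nn$; this is just the tail of rule (i) from the setup, obtained by telescoping $\phi_j(h)=\phi_{j+1}(h)+x_j\,d^j(h)$ from $j=r+1$ down to $j=n$ and using $\phi_{n+1}(h)=0$. Feeding these two expansions into the displayed formula and collecting the coefficient of each $x_i$ with $i>r$ yields
\[
d^r(fg)=d^r(f)d^r(g)x_r+\sum_{i>r}\bigl(d^r(f)d^i(g)+d^r(g)d^i(f)\bigr)x_i,
\]
as claimed. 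The main obstacle here is essentially bookkeeping: one must keep the substitution operators straight and isolate the telescoping identity for $\phi_{r+1}$, after which the product rule falls out of the single expansion above.
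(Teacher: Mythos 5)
Your proof is correct and follows essentially the same route as the paper: the paper's one-line argument also rests on the multiplicativity of the substitution $h\mapsto h(0,\ldots,0,x_s,\ldots,x_n)$ together with the expansion $h(0,\ldots,0,x_s,\ldots,x_n)=\sum_{i\geq s}d^i(h)x_i$, applied at $s=r$ and $s=r+1$. Your version merely reorganizes the same computation by substituting $\phi_r=\phi_{r+1}+x_r d^r$ before expanding, which is a harmless rearrangement.
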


\begin{proof} (i) is obvious. (ii) follows easily from (\ref{quasipartial}) and the equations
\[
(fg)(0,\ldots,0,x_s,\ldots,x_n) =\sum_{i,j=s}^nd^i(f)d^j(g)x_ix_j,
\]
for $s=r$ and $s=r+1$.
\end{proof}

Let   $\Omega_1$ be the  free $S$-module of rank $n$ with basis
$dx_1,\ldots, dx_n$. We denote by $\Omega$ the exterior algebra
$\bigwedge \Omega_1$ of $\Omega_1$.  Note that $\Omega$ is a graded
$S$-algebra with graded components $\Omega_i=\bigwedge^i \Omega_1$.
In particular,  $\Omega_0=S$ and $\Omega_i$ is a free $S$-module
with basis
\[
dx_{r_1}\wedge \ldots \wedge dx_{r_i}, \quad 1\leq r_1<r_2< \ldots <r_i\leq n.
\]
Let $\partial_i:\Omega_i\rightarrow \Omega_{i-1}$ be the $S$-linear map
given by
\[ \partial_i(dx_{r_1}\wedge \ldots \wedge dx_{r_i})=\sum_{k=1}^i (-1)^{k+1}x_{r_k}dx_{r_1}\wedge\ldots \wedge dx_{r_{k-1}}\wedge dx_{r_{k+1}}\wedge\ldots\wedge dx_i\]

To simplify our notation we shall write $dx_{r_1}\ldots dx_{r_i}$
for $dx_{r_1}\wedge \ldots \wedge dx_{r_i}$.

The complex
$\Omega$ is nothing but  the Koszul complex with respect to the
sequence $x_1,\ldots, x_n$,  and so is a minimal free $S$-resolution of
the residue field $K$ of $S$.

Now let $I$  be a proper ideal of $S$ and
$(\FF,\delta)$  a minimal free $S$-resolution
of $R=S/I$. Then for each $i=0,\ldots,n$ we have the following isomorphism
\[
\psi_i:K\otimes F_i=H_i(K\otimes \FF)\cong H_i(\Omega\otimes \FF)\cong H_i(\Omega\otimes S/I).
\]
Tracing through this isomorphism one obtains $i$-cycles in $\Omega\otimes S/I$ whose homology classes form a $K$-basis of $H_i(\Omega\otimes S/I)$.

We recall that an element $z=(z_0\ldots,z_i)\in
(\Omega\otimes \FF)_i=\bigoplus_{j=0}^i
\Omega_j\otimes F_{i-j}$ is a cycle in $(\Omega\otimes\FF)$ if and only if
\begin{eqnarray}
\label{cycle-eq}
(\id\otimes\delta_{i-j})(z_j)=(-1)^{j}(\partial_{j+1}\otimes \id)(z_{j+1})\quad \text{for all} \quad  j.
\end{eqnarray}

Now the isomorphism  $\psi_i$ can be describe as follows: let $1\otimes
f\in K\otimes F_i$ and choose a cycle $z=(z_0\ldots,z_i)\in
(\Omega\otimes \FF)_i$ such that $z_0$ maps
to $1\otimes f$ under canonical epimorphism $\Omega_0\otimes
F_i\rightarrow K\tensor F_i$. Then
\[\psi_i(1\otimes
f)=[\bar{z_i}],\]
where $\bar{z_i}$ denotes the image of $z_i$ in
$\Omega_i\otimes S/I$  and $[\bar{z_i}]$ its homology class in  $H_i(\Omega\otimes S/I)$.

In order to make this description of $\psi_i$ more explicit one has to choose suitable  cycles  $z=(z_0\ldots,z_i)\in
(\Omega\otimes \FF)_i$. There are of course many choices for such cycles. In \cite{H} the partial derivatives  were used to describe these  cycles. Here we replace the partial derivatives by our $d^r$-operators.

Let $b_i$ be the rank of $F_i$. For each $i$ we  choose a basis $e_{i1},\ldots,e_{ib_i}$,  and let
$$\delta_i(e_{ij})=\sum_{k=1}^{b_i}\alpha_{kj}^{(i)}e_{{i-1}k}$$
for all $i,j$.

The following result is the crucial technical statement of this note.

\begin{Proposition}\label{cycle-form}
Consider the element
$(z_0,\ldots,z_i)\in(\Omega\otimes\FF)_i$
with $z_0=1\otimes e_{ij}$ and
\begin{eqnarray*}
z_{i-k}= \hspace{13.5cm}\\
\sum_{j_{k}=1}^{b_{k}}\bigg (\sum_{1\leq r_{k+1}<\ldots<r_i\leq n} \sum_{j_{k+1}=1}^{b_{k+1}}\cdots \sum_{j_{i-1}=1}^{b_{i-1}} d^{r_{k+1}}(\alpha^{(k+1)}_{j_{k}j_{k+1}})
\ldots d^{r_i}(\alpha^{(i)}_{j_{i-1}j}) dx_{r_{k+1}}\ldots
dx_{r_i}\bigg)\otimes e_{k j_{k}}
\end{eqnarray*}
for all $0\leq k<i$.

Then
\[ (\id\otimes\delta_{k+1})(z_{i-k-1})=(\partial_{i-k}\otimes \id)(z_{i-k})\]
for all $k$.
\end{Proposition}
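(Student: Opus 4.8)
The plan is to separate the two tensor directions and reduce to a single family of identities in $\Omega$. Since $\id\otimes\delta_{k+1}$ acts only on the $\FF$-factor, $\partial_{i-k}\otimes\id$ only on the $\Omega$-factor, and both are $S$-linear, I would write $z_{i-k}=\sum_{j_k} c_{k j_k}\otimes e_{k j_k}$, where $c_{k j_k}\in\Omega_{i-k}$ denotes the bracketed coefficient in the statement, and expand $z_{i-k-1}$ likewise. Using $\delta_{k+1}(e_{k+1,j_{k+1}})=\sum_{j_k}\alpha^{(k+1)}_{j_k j_{k+1}}e_{k j_k}$ and comparing the coefficient of each basis vector $e_{k j_k}$ of $F_k$ on the two sides, the assertion becomes the family of identities in $\Omega_{i-k-1}$
\begin{equation*}
\partial_{i-k}(c_{k j_k})=\sum_{j_{k+1}=1}^{b_{k+1}}\alpha^{(k+1)}_{j_k j_{k+1}}\,c_{k+1,j_{k+1}},\qquad 1\le j_k\le b_k. \tag{$\ast$}
\end{equation*}
This is the statement I would actually prove.

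I would establish $(\ast)$ by induction on the length $i-k$. By minimality of $\FF$ the entries $\alpha^{(s)}_{pq}$ lie in $\nn$, so the operators $d^r$ apply to them and rule~(i) gives $f=\sum_r d^r(f)x_r$ for each such entry. Introducing the $S$-linear map $D(f)=\sum_{r=1}^n d^r(f)\,dx_r\in\Omega_1$, rule~(i) is precisely the statement $\partial_1(D(f))=f$. The base case $i-k=1$ is then immediate: here $c_{i-1,j_{i-1}}=\sum_r d^r(\alpha^{(i)}_{j_{i-1}j})\,dx_r=D(\alpha^{(i)}_{j_{i-1}j})$, while $c_{i j}=1$ and $c_{i j_i}=0$ for $j_i\ne j$ because $z_0=1\otimes e_{ij}$; hence $\partial_1(c_{i-1,j_{i-1}})=\alpha^{(i)}_{j_{i-1}j}$, which is $(\ast)$.

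For the inductive step I would use that the Koszul differential $\partial$ is an anti-derivation on $\Omega$ and that $c_{k j_k}$ is obtained from the forms $c_{k+1,\ast}$ by prepending a factor $D(\alpha^{(k+1)}_{j_k j_{k+1}})$. The crucial subtlety, and what I expect to be the main obstacle, is that $c_{k j_k}$ is \emph{not} equal to $\sum_{j_{k+1}}D(\alpha^{(k+1)}_{j_k j_{k+1}})\wedge c_{k+1,j_{k+1}}$: the defining sum runs only over strictly increasing tuples $r_{k+1}<\dots<r_i$, whereas that wedge also produces ``out of order'' terms in which the new index is not the smallest. I would therefore compute $\partial_{i-k}(c_{k j_k})$ by recording, for each target monomial form $dx_{p_1}\cdots dx_{p_{i-k-1}}$, every way $\partial$ can delete a letter, that is, every position into which the deleted index can be reinserted. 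On the right-hand side of $(\ast)$ I would expand the undifferentiated factor $\alpha^{(k+1)}_{j_k j_{k+1}}=\sum_s d^s(\alpha^{(k+1)}_{j_k j_{k+1}})x_s$ by rule~(i); the product rule Lemma~\ref{prod}(ii) is then exactly the tool that converts the cross-terms of the two expansions into one another, and the genuinely extra contributions (those in which the reinserted index lands in the interior) are precisely the ones annihilated by $\delta_{k+1}\delta_{k+2}=0$, which after applying $d^s$ and the product rule reads $\sum_{j_{k+1}}d^s(\alpha^{(k+1)}_{j_k j_{k+1}}\alpha^{(k+2)}_{j_{k+1}j_{k+2}})=0$.

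The hard part is thus the sign-and-index bookkeeping in this last step: one must match the alternating signs produced by $\partial$ as an anti-derivation (equivalently, by reordering the reinserted index back into increasing position) against the asymmetric shape of the product rule, and check that, after summing over the internal index $j_{k+1}$, every term not already accounted for by the inductive hypothesis $\partial(c_{k+1,\ast})=\sum\alpha^{(k+2)}c_{k+2,\ast}$ is absorbed by $\delta_{k+1}\delta_{k+2}=0$. The triangularity $d^r(f)\in S_r$ from rule~(ii) is what keeps all of this compatible with the chosen order of the variables. I expect no conceptual difficulty beyond this combinatorial matching; the case $i-k=2$ already displays the full mechanism (rule~(i), the product rule, and $\delta^2=0$) in miniature and can serve as the template for the general inductive step.
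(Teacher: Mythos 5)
Your reduction of the Proposition to the coefficient identities $(\ast)$ is correct and is exactly the paper's own reduction (its equation (\ref{assertion}), up to an index shift), and your base case agrees with the paper's. The gap is in the inductive step, at precisely the point you defer. When you compute $\partial_{i-k}(c_{k j_k})$ --- whether by enumerating deletions or by writing $c_{k j_k}=\sum_{r}\sum_{j_{k+1}}d^{r}(\alpha^{(k+1)}_{j_k j_{k+1}})\,dx_{r}\wedge c^{>r}_{k+1,j_{k+1}}$ and using that $\partial$ is an antiderivation --- the forms that actually occur are not $c_{k+1,j_{k+1}}$ but their truncations $c^{>r}_{k+1,j_{k+1}}$ (the part of the defining sum with all indices $>r$), forced by the ordering constraint you yourself flagged. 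Your inductive hypothesis concerns $\partial(c_{k+1,j_{k+1}})$ and says nothing about $\partial(c^{>r}_{k+1,j_{k+1}})$, so it cannot be invoked; and the leftover terms are \emph{not} ``precisely the ones annihilated by $\delta_{k+1}\delta_{k+2}=0$''. Carrying out the case $i-k=3$ shows that after the relation $\alpha^{(k+1)}\alpha^{(k+2)}=0$ absorbs the first layer of diagonal and out-of-order terms, a second layer survives that needs $\alpha^{(k+2)}\alpha^{(k+3)}=0$, and in general the layers cascade down the whole chain. This cascade is invisible at $i-k=2$, because there the only truncated form is $\sum_{s>r}d^s(\alpha^{(i)}_{j_{i-1}j})\,dx_s$, whose image under $\partial_1$ is computable by hand; so your claim that the $i-k=2$ case ``displays the full mechanism'' is exactly what fails --- the real difficulty first appears at length $3$.

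There are two ways to close the gap. The paper's way is to abandon induction altogether: it expands $h_{j_{k-1}}$ by rule (i) and then iterates the product-rule consequence of $\alpha^{(l)}\alpha^{(l+1)}=0$ for \emph{every} $l=k,\dots,i-1$ (``proceeding this way''), arriving at the alternating sum (\ref{hj}), which is then matched against the direct expansion of $\partial_{i-k+1}(g_{j_{k-1}})$; so the paper's argument consumes all the relations of the complex in a single computation. Alternatively, your induction can be repaired by strengthening the hypothesis to the truncated statement: for every $r\geq 0$, $\partial\bigl(c^{>r}_{k+1,j_{k+1}}\bigr)=\sum_{j_{k+2}}\bigl(\sum_{s>r}d^s(\alpha^{(k+2)}_{j_{k+1}j_{k+2}})x_s\bigr)\,c^{>r}_{k+2,j_{k+2}}$ (the case $r=0$ being $(\ast)$). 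With this stronger hypothesis the step does close using only Lemma~\ref{prod}(ii) and the single relation $\delta_{k+1}\delta_{k+2}=0$: the leftover organizes, for each minimal index $c$, into $\sum_{j_{k+1}}d^{c}\bigl(\alpha^{(k+1)}_{j_k j_{k+1}}\alpha^{(k+2)}_{j_{k+1}j_{k+2}}\bigr)=0$, vindicating your intuition. But as written, with the unstrengthened hypothesis, the inductive step does not go through, so the proposal has a genuine gap.
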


\begin{proof}
 Since
$\alpha^{(i)}_{j_{i-1}j}=\sum_{r=1}^n
d^r(\alpha^{(i)}_{j_{i-1}j})x_r$ it is obvious that  $(\id\otimes
\delta_i)(z_0)=(\partial_1\otimes \id)(z_1)$. Thus  the assertion
is true for $i-k=1$. Now let  $i-k>1$. Then
\[
(\id\otimes\delta_{k})(z_{i-k}) =\sum_{j_{k-1}=1}^{b_{k-1}}h_{j_{k-1}}\tensor e_{k-1 j_{k-1}},
\]
where
\[
h_{j_{k-1}}=\sum_{1\leq r_{k+1}<\ldots<r_i\leq n} \sum_{j_{k}=1}^{b_{k}}\ldots \sum_{j_{i-1}=1}^{b_{i-1}}\alpha^{(k)}_{j_{k-1}j_{k}} d^{r_{k+1}}(\alpha^{(k+1)}_{j_{k}j_{k+1}})
\cdots d^{r_i}(\alpha^{(i)}_{j_{i-1}j})dx_{r_{k+1}}\cdots dx_{r_i}.
\]
In order to prove the assertion it suffices to show that for all $j_{k-1}=1,\ldots,b_{k-1}$ we have
\begin{equation}\label{assertion}
h_{j_{k-1}}=\partial_{i-k+1}(g_{j_{k-1}}),
\end{equation}
where
\[
g_{j_{k-1}}=\sum_{1\leq r_{k}<\ldots<r_i\leq
n} \sum_{j_{k}=1}^{b_{k}}\ldots
\sum_{j_{i-1}=1}^{b_{i-1}}d^{r_k}(\alpha^{(k)}_{j_{k-1}j_{k}})
\cdots d^{r_i}(\alpha^{(i)}_{j_{i-1}j}) dx_{r_{k}}\cdots
dx_{r_i}.
\]
By applying rule (i), we see that
\[
h_{j_{k-1}}=\sum_{1\leq r_{k+1}<\ldots<r_i\leq n}a_{ r_{k+1},\ldots, r_i},
\]
where
\[
a_{ r_{k+1},\ldots, r_i}=\sum_{r=1}^{n} \sum_{j_{k}=1}^{b_{k}}\ldots \sum_{j_{i-1}=1}^{b_{i-1}}d^{r}(\alpha^{(k)}_{j_{k-1}j_{k}}) d^{r_{k+1}}(\alpha^{(k+1)}_{j_{k}j_{k+1}})
\cdots d^{r_i}(\alpha^{(i)}_{j_{i-1}j})x_rdx_{r_{k+1}}\cdots
dx_{r_i}.
\]
Since $\alpha^{(k)}\alpha^{(k+1)}=0$, ´Lemma~\ref{prod} implies that
\begin{eqnarray}
\label{formula}
\sum_{j_{k}=1}^{b_{k}}d^{r_{k+1}}(\alpha^{(k)}_{j_{k-1}j_{k}})d^{r_{k+1}}(\alpha^{(k+1)}_{j_{k}j_{k+1}})x_{r_{k+1}}=\hspace{6.5cm}\\ -\sum_{r_{k+1}<r\leq
n}\sum_{j_{k}=1}^{b_{k}}d^{r}(\alpha^{(k)}_{j_{k-1}j_{k}})
d^{r_{k+1}}(\alpha^{(k+1)}_{j_{k}j_{k+1}})x_r-\sum_{r_{k+1}<r\leq
n}\sum_{j_{k}=1}^{b_{k}}d^{r_{k+1}}(\alpha^{(k)}_{j_{k-1}j_{k}})
d^{r}(\alpha^{(k+1)}_{j_{k}j_{k+1}})x_r. &&\nonumber
\end{eqnarray}
By using this identity, we obtain for  $a_{ r_{k+1},\ldots, r_i}$ the expression
\begin{eqnarray*}
 a_{ r_{k+1},\ldots, r_i}=\hspace{13cm}\\
\sum_{1\leq r<r_{k+1}}\sum_{j_{k}=1}^{b_{k}}\ldots \sum_{j_{i-1}=1}^{b_{i-1}}d^{r}(\alpha^{(k)}_{j_{k-1}j_{k}}) d^{r_{k+1}}(\alpha^{(k+1)}_{j_{k}j_{k+1}})
\cdots d^{r_i}(\alpha^{(i)}_{j_{i-1}j})x_{r}dx_{r_{k+1}}\cdots
dx_{r_i} \hspace{3cm}\\
-\sum_{r_{k+1}<r\leq n}^{n}\sum_{j_{k}=1}^{b_{k}}\ldots
\sum_{j_{i-1}=1}^{b_{i-1}}d^{r_{k+1}}(\alpha^{(k)}_{j_{k-1}j_{k}})
d^{r}(\alpha^{(k+1)}_{j_{k}j_{k+1}}) d^{k_{r+2}}(\alpha^{(k+2)}_{j_{k+1}j_{k+2}})\cdots \hspace{4.5cm}\\
\cdots
d^{r_i}(\alpha^{(i)}_{j_{i-1}j})x_{r}dx_{r_{k+1}}\cdots dx_{r_i}.\hspace{2cm}
\end{eqnarray*}
Next we use the analogue to formula (\ref{formula}) corresponding to the fact that  $\alpha^{(k+1)}\alpha^{(k+2)}=0$. Then the  sum in the bottom row of the previous expression for $a_{
r_{k+1},\ldots, r_i}$ can be rewritten as
\begin{eqnarray*}
\sum_{r_{k+1}<r< r_{k+2}}\sum_{j_{k}=1}^{b_{k}}\ldots
\sum_{j_{i-1}=1}^{b_{i-1}}d^{r_{k+1}}(\alpha^{(k)}_{j_{k-1}j_{k}})
d^{r}(\alpha^{(k+1)}_{j_{k}j_{k+1}}) \cdots
d^{r_i}(\alpha^{(i)}_{j_{i-1}j})x_{r}dx_{r_{k+1}}\cdots dx_{r_i}\hspace{2cm}\\
-\sum_{r_{k+2}<r\leq n}^{n}\sum_{j_{k}=1}^{b_{k}}\ldots
\sum_{j_{i-1}=1}^{b_{i-1}}d^{r_{k+1}}(\alpha^{(k)}_{j_{k-1}j_{k}})
d^{r_{k+2}}(\alpha^{(k+1)}_{j_{k}j_{k+1}})d^{r}(\alpha^{(k+2)}_{j_{k+1}j_{k+2}})d^{r_{k+3}}(\alpha^{(k+3)}_{j_{k+2}j_{k+3}})\cdots\hspace{1cm}\\
\cdots d^{r_i}(\alpha^{(i)}_{j_{i-1}j})x_{r}dx_{r_{k+1}}\cdots
dx_{r_i}.\hspace{1cm}
\end{eqnarray*}
Proceeding this way,  we obtain that
\begin{eqnarray}
\label{hj}
h_{j_{k-1}}= \sum_{1\leq r_{k+1}<\ldots<r_i\leq n} \sum_{l=k}^{i}(-1)^{l-k}c^l_{r_{k+1},\ldots,r_i},
\end{eqnarray}
where
\begin{eqnarray*}
c^l_{r_{k+1},\ldots,r_i}=\sum_{r_l<r<r_{l+1}}\sum_{j_k=1}^{b_k}\ldots \sum_{j_{i-1}=1}^{b_i}d^{r_{k+1}}(\alpha^{(k)}_{j_{k-1},j_k})\cdots d^{r_{l}}(\alpha^{(l-1)}_{j_{l-2},j_{l-1}})d^{r}(\alpha^{(l)}_{j_{l-1},j_{l}})\cdot\hspace{3cm}\\
d^{r_{l+1}}(\alpha^{(l+1)}_{j_{l},j_{l+1}})
\cdots d^{r_{i}}(\alpha^{(i)}_{j_{i-1},j})x_r dx_{r_{k+1}}\cdots dx_i.\hspace{2cm}
\end{eqnarray*}
Here, by definition,  $r_k=1$ and $r_{i+1}=n$.

\medskip
On the other hand,
\begin{eqnarray*}
\partial_{i-k+1}(g_{j_{k-1}})
 =\partial_{i-k+1}( \sum_{1\leq r^{'}_{k}<\ldots<r^{'}_i\leq n} \sum_{j_{k}=1}^{b_{k}}\ldots
\sum_{j_{i-1}=1}^{b_{i-1}}d^{r^{'}_k}(\alpha^{(k)}_{j_{k-1}j_{k}})
\ldots d^{r^{'}_i}(\alpha^{(i)}_{j_{i-1}j}) dx_{r^{'}_{k}}\ldots
dx_{r^{'}_i})\hspace{3cm} \\
= \sum_{1\leq r^{'}_{k}<\ldots<r^{'}_i\leq n} \sum_{j_{k}=1}^{b_{k}}\ldots
\sum_{j_{i-1}=1}^{b_{i-1}}\sum_{s=1}^{s=i-k+1} (-1)^{s+1}d^{r^{'}_k}(\alpha^{(k)}_{j_{k-1}j_{k}})\ldots\hspace{8cm}\\
\ldots d^{r^{'}_i}(\alpha^{(i)}_{j_{i-1}j})x_{r^{'}_{k+1-s}} dx_{r^{'}_{k}}\ldots dx_{r^{'}_{k-s}}dx_{r^{'}_{k-s+2}}
dx_{r^{'}_i} \hspace{3cm}\\
=\sum_{1\leq r^{'}_{k}<\ldots<r^{'}_i\leq n}\sum_{s=1}^{i-k+1}(-1)^{s+1}c^{{k+1-s}}_{r'_k,\ldots, r'_{k-s},r'_{k-s+2},\ldots,r'_i}.\hspace{10cm}
\end{eqnarray*}
Comparing this with (\ref{hj}) the desired equality (\ref{assertion}) follows.
\end{proof}

As a consequence of Proposition~\ref{cycle-form} we obtain

\begin{Theorem}
\label{cycle}
For $i=1,\ldots,n$ and $j=1,\ldots,b_i$ let
\[
z_{ij}= \sum_{1\leq r_{1}<\ldots<r_i\leq n} \sum_{j_1=1}^{b_1}\ldots \sum_{j_{i-1}=1}^{b_{i-1}} d^{r_{1}}(\alpha^{(1)}_{j_{0}j_{1}})
\ldots d^{r_i}(\alpha^{(i)}_{j_{i-1}j}) dx_{r_{1}}\ldots
dx_{r_i},
\]
and denote its image in $\Omega_i\tensor S/I$ by $\bar{z}_{ij}$. Then for all $i$ and $j$, the element $\bar{z}_{ij}$ are cycles of $\Omega\tensor S/I$, and  the homology classes $[\bar{z}_{ij}]$ with $j=1,\ldots,b_i$ form a $K$-basis of $H_i(\Omega\otimes S/I)$.
\end{Theorem}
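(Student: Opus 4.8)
The plan is to obtain Theorem~\ref{cycle} as a direct consequence of Proposition~\ref{cycle-form} together with the description of the isomorphism $\psi_i$ recorded above. First I would fix $i$ and $j$ and feed $z_0=1\otimes e_{ij}$ into Proposition~\ref{cycle-form}; its top component is exactly the element $z_{ij}$ of the Theorem (the case $k=0$), now regarded inside $\Omega_i=\Omega_i\otimes F_0$. To verify that $\bar z_{ij}$ is a cycle of $\Omega\otimes S/I$ it suffices to use the $k=0$ instance of the Proposition, $(\id\otimes\delta_1)(z_{i-1})=(\partial_i\otimes\id)(z_{ij})$. Because $\FF$ resolves $R=S/I$ we have $F_0=S$, and the entries $\alpha^{(1)}_{1j_1}=\delta_1(e_{1j_1})$ are generators of $I$; hence the left-hand side lies in $I\,\Omega_{i-1}$. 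Consequently $\partial_i(z_{ij})\in I\,\Omega_{i-1}$, so its image in $\Omega_{i-1}\otimes S/I$ vanishes and $\bar z_{ij}$ is a cycle.

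Next I would assemble the whole tuple $(z_0,\ldots,z_i)$ into a cycle of the total complex $\Omega\otimes\FF$. After re-indexing by $m=i-k$, the relations of Proposition~\ref{cycle-form} read $(\id\otimes\delta_{i-m+1})(z_{m-1})=(\partial_m\otimes\id)(z_m)$ for $1\le m\le i$; these are sign-free, whereas the cycle condition (\ref{cycle-eq}) carries a factor $(-1)^{m-1}$. I would therefore rescale the components by signs, putting $z'_m=(-1)^{\binom{m}{2}}z_m$. With $\epsilon_m=(-1)^{\binom{m}{2}}$ one checks $\epsilon_{m}=(-1)^{m-1}\epsilon_{m-1}$ and $\epsilon_0=1$, which is precisely the relation forced by (\ref{cycle-eq}); hence $(z'_0,\ldots,z'_i)$ is an honest cycle of $\Omega\otimes\FF$ whose bottom term $z'_0=1\otimes e_{ij}$ maps to $1\otimes e_{ij}$ under $\Omega_0\otimes F_i\to K\otimes F_i$. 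The explicit description of $\psi_i$ then gives $\psi_i(1\otimes e_{ij})=[\bar z'_i]=(-1)^{\binom{i}{2}}[\bar z_{ij}]$.

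Finally I would conclude using the fact that $\psi_i$ is an isomorphism. Since $\{1\otimes e_{ij}:j=1,\ldots,b_i\}$ is a $K$-basis of $K\otimes F_i$, its image $\{(-1)^{\binom{i}{2}}[\bar z_{ij}]\}_{j=1}^{b_i}$ is a $K$-basis of $H_i(\Omega\otimes S/I)$, and dividing out the common nonzero scalar $(-1)^{\binom{i}{2}}$ shows that $\{[\bar z_{ij}]\}_{j=1}^{b_i}$ is itself a $K$-basis. The substantive computation is already carried out in Proposition~\ref{cycle-form}, so the only genuine obstacle in this argument is the sign bookkeeping: reconciling the sign-free identities of the Proposition with the signed convention of (\ref{cycle-eq}), and then observing that the resulting signs are harmless scalars once one passes to $K$-bases.
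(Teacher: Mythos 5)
Your proposal is correct and follows essentially the same route as the paper: the paper's proof likewise combines Proposition~\ref{cycle-form} with the sign convention of (\ref{cycle-eq}) and the isomorphism $\psi_i$ to conclude $\psi_i(1\otimes e_{ij})=\pm[\bar z_{ij}]$, which gives the basis statement. The only difference is that you carry out the sign bookkeeping explicitly (identifying the scalar as $(-1)^{\binom{i}{2}}$) and spell out why $\partial_i(z_{ij})\in I\,\Omega_{i-1}$, details the paper compresses into ``$\pm$'' and the reference to (\ref{cycle-eq}).
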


\begin{proof}
The elements $1\otimes e_{ij}$ with  $1\leq j\leq b_i $ form a $K$-basis
of $K\otimes F_i$. Proposition~\ref{cycle-form} together with  (\ref{cycle-eq}) implies
that $\psi_i(1\otimes e_{ij})=\pm[\bar{z}_{ij}]$,  and this yields the assertion.
\end{proof}

\section{A Golod criterion}

Let $S$ be  as before and $R=S/I$ with $I\subset \nn$. As before we
assume that $I$ is a graded ideal,  if $S$ is the polynomial ring.

\medskip

We will use the following Golod criterion given in \cite{RA} by the
second author.

\begin{Theorem}
\label{rasoul}
Let $I\subset J\subset S$ be ideals with  $J^2\subset I$ and such that the natural maps
\[
\Tor_i^S(K,S/I)\to \Tor_i^S(K,S/J)
\]
are zero for all $i\geq 1$. Then $I$ is Golod.
\end{Theorem}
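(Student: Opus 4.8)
The plan is to translate the statement into the language of Koszul homology and then deduce Golodness from the vanishing of the Massey products on the Koszul homology algebra of $R$. Since $\Omega$ is a minimal free $S$-resolution of $K$, for every finitely generated $S$-module $M$ there is a natural isomorphism $\Tor_i^S(K,M)\cong H_i(\Omega\otimes M)$. Thus the hypothesis asserts exactly that the map on Koszul homology
\[
H_i(\Omega\otimes S/I)\To H_i(\Omega\otimes S/J),\qquad i\geq 1,
\]
induced by the surjection $R=S/I\to S/J$, is zero. First I would package this as a statement about the surjection of differential graded $K$-algebras $\pi\colon A\to B$, where $A=\Omega\otimes S/I=\Omega/I\Omega$ and $B=\Omega/J\Omega$. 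Its kernel is $\ker\pi=J\Omega/I\Omega$, and since $J\Omega\wedge J\Omega\subseteq J^2\Omega\subseteq I\Omega$, this kernel is a \emph{square-zero} differential graded ideal, that is $(\ker\pi)^2=0$ in $A$. Because $H_\bullet(A)$ is the Koszul homology algebra of $R$, by the classical criterion of Golod it suffices to equip $H_{\geq 1}(A)$ with a trivial system of Massey operations: a function $\gamma$ assigning to each finite sequence of homology basis classes a chain $\gamma(h_1,\dots,h_p)\in A$ with $\gamma(h)$ a cycle representing $h$ and, writing $\bar a=(-1)^{\deg a+1}a$,
\[
\partial\gamma(h_1,\dots,h_p)=\sum_{l=1}^{p-1}\overline{\gamma(h_1,\dots,h_l)}\,\gamma(h_{l+1},\dots,h_p).
\]

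The mechanism that produces such a $\gamma$ is already visible at the level of products, and I would record it first. Let $z_1,z_2$ be positive-degree cycles in $A$. By the vanishing hypothesis each $\pi(z_i)$ is a boundary in $B$, and lifting a primitive along the surjection $\pi$ yields $a_i\in A$ with $\partial a_i=z_i+\epsilon_i$ and $\epsilon_i\in\ker\pi$. A short computation using $\partial z_i=0$, $\partial^2=0$, and $\epsilon_1\epsilon_2\in(\ker\pi)^2=0$ then gives
\[
z_1z_2=\partial a_1\cdot z_2+z_1\cdot\partial a_2-\partial a_1\cdot\partial a_2=\partial\bigl(a_1z_2+(-1)^{\deg z_1}z_1a_2-a_1\partial a_2\bigr),
\]
so $z_1z_2$ is a boundary and, up to the sign dictated by the Massey convention, the term in parentheses serves as $\gamma(h_1,h_2)$. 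This reveals the three ingredients driving the whole construction: the hypothesis supplies primitives $a_i$ for the cycles, the cycle condition $\partial z_i=0$ turns the mixed terms into boundaries, and $(\ker\pi)^2=0$ annihilates the purely quadratic error $\epsilon_1\epsilon_2$.

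It remains to carry this through all higher Massey operations by induction on $p$, and this is the step I expect to be the main obstacle. Assuming $\gamma$ has been defined on all shorter sequences, the obstruction class is the sum $m=\sum_{l=1}^{p-1}\overline{\gamma(h_1,\dots,h_l)}\,\gamma(h_{l+1},\dots,h_p)$; the lower Massey relations together with $\partial^2=0$ show that $m$ is a cycle, and one must produce $\gamma(h_1,\dots,h_p)$ with $\partial\gamma(h_1,\dots,h_p)=m$, i.e.\ exhibit $m$ as a boundary. The difficulty is that, in contrast to the case $p=2$, the intermediate chains $\gamma(h_1,\dots,h_l)$ with $l\geq 2$ are no longer cycles, so the hypothesis $H_{\geq1}(\pi)=0$ cannot be applied to them directly. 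To overcome this I would strengthen the inductive hypothesis so that it records how each $\gamma$ decomposes relative to $\ker\pi$ --- keeping track of the auxiliary primitives $a$ alongside the $\gamma$ --- and then verify that in the relevant differential every error term either lands in $(\ker\pi)^2=0$ or is converted into a boundary by a homotopy assembled from these primitives, with all Koszul signs consistent. Organizing this bookkeeping, namely pinning down the correct refined invariant and checking the sign-consistent telescoping of the summands, is the technical heart of the argument; once the trivial system $\gamma$ is in place, Golod's criterion gives that $R$ is Golod.
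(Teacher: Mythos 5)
You should know at the outset that the paper contains no proof of this statement: Theorem~\ref{rasoul} is imported from the second author's paper \cite{RA} (Proposition~1.3 there), so the comparison must be with that cited argument. Your framework is the right one and is essentially the cited one: identify $\Tor_i^S(K,-)$ with Koszul homology, note that $\pi\colon A=\Omega\otimes S/I\to B=\Omega\otimes S/J$ is a surjection of DG algebras whose kernel $J\Omega/I\Omega$ squares to zero because $J^2\subset I$, and then produce a trivial Massey operation on $A$ so that Golod's criterion applies. Your degree-two computation is also correct.

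However, the proposal stops exactly where the theorem begins: constructing $\gamma(h_1,\dots,h_p)$ for $p\ge 3$ is the entire content of the result, and what you offer for it --- ``strengthen the inductive hypothesis,'' ``keep track of the auxiliary primitives,'' ``check the sign-consistent telescoping'' --- is a plan, not an argument; moreover the plan is misdirected, since no primitives or homotopies for the intermediate (non-cycle) chains are ever needed. The missing idea is this. For each basis class $h_\lambda$ fix a cycle $z_\lambda$ and, using the hypothesis together with surjectivity of $\pi$, an element $y_\lambda\in A$ with $u_\lambda:=z_\lambda-\partial y_\lambda\in\ker\pi$. Each $u_\lambda$ is then itself a \emph{cycle} lying in $\ker\pi$, hence $u_\lambda\,u_\mu=0$ and more generally $u_\lambda\cdot\ker\pi=0$; these two facts replace the vanishing hypothesis from here on, so that hypothesis is used exactly once, on the basis cycles. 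Now define, with $\bar a=(-1)^{|a|+1}a$,
\[
\gamma(h_1,h_2)=(-1)^{|y_1|}y_1z_2-u_1y_2,
\qquad
\gamma(h_1,\dots,h_p)=(-1)^{|y_1|}\,y_1\,\gamma(h_2,\dots,h_p)\quad(p\ge 3),
\]
and verify the Massey relations by induction on $p$: since $\partial y_1=z_1-u_1$,
\[
\partial\gamma(h_1,\dots,h_p)=\bar z_1\gamma(h_2,\dots,h_p)-(-1)^{|y_1|}u_1\gamma(h_2,\dots,h_p)
+y_1\sum_{l=2}^{p-1}\overline{\gamma(h_2,\dots,h_l)}\,\gamma(h_{l+1},\dots,h_p).
\]
A sign check gives $\overline{\gamma(h_1,\dots,h_l)}=y_1\overline{\gamma(h_2,\dots,h_l)}$ for $l\ge 3$, so those summands are literally the required terms $\overline{\gamma(h_1,\dots,h_l)}\,\gamma(h_{l+1},\dots,h_p)$; the $l=2$ summand differs from the required $\overline{\gamma(h_1,h_2)}\,\gamma(h_3,\dots,h_p)$ by exactly $-(-1)^{|z_1|+|z_2|}u_1y_2\gamma(h_3,\dots,h_p)$, and this is precisely what the error term $-(-1)^{|y_1|}u_1\gamma(h_2,\dots,h_p)$ equals, because $\gamma(h_2,\dots,h_p)\equiv(-1)^{|y_2|}y_2\gamma(h_3,\dots,h_p)$ modulo $\ker\pi$ and $u_1$ annihilates $\ker\pi$. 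So your approach is salvageable and indeed matches the cited proof in spirit, but the step you deferred as bookkeeping is the missing mathematical idea, and as written the proof is incomplete.
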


Let $I=(f_1,\ldots,f_m)$. We define  $d(I)$ to be the ideal generated by the elements $d^i(f_j)$ with  $i=1,\ldots,n$ and $j=1,\ldots,m$.
Note that  $I\subset d(I)$ and that $d(I)$ does not depend on the particular choice of the generators, but of course on the order of the variables.  If $x_{\sigma(1)},\ldots,x_{\sigma(n)}$ is a relabeling  the variables given by the permutation  $\sigma$, then for $i=1,\ldots,n$ we set
\[
d_\sigma^i(f)=\sigma(d^i(f(x_{\sigma^{-1}(1)},\ldots,x_{\sigma^{-1}(n)})),
\]
and let $d_\sigma(I)$ be ideal generated by the elements $d_\sigma^i(f_j)$.

\medskip
The important conclusion that arises from our description of the Koszul cycles is the following

\begin{Corollary}
\label{key}
Let $I$ be a proper ideal of $S$ and  $\sigma$
be any  permutation of the integers $1,\ldots,n$. Then the natural map
 \[
 \Tor^S_i(K,S/I)\rightarrow \Tor^S_i(K,S/d_{\sigma}(I))
 \]
induced by the surjection $S/I\rightarrow S/d_{\sigma}(I)$ is zero for
all $i\geq 1$.
\end{Corollary}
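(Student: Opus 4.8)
The plan is to deduce the vanishing directly from the explicit cycles of Theorem~\ref{cycle}, carrying out the whole construction of Section~1 with respect to the ordering of the variables prescribed by $\sigma$. First I would use that, because $\Omega$ is a minimal free $S$-resolution of $K$, there is an identification $\Tor^S_i(K,M)\cong H_i(\Omega\otimes M)$ that is natural in $M$. Under this identification the map in the statement becomes the map
\[
H_i(\Omega\otimes S/I)\to H_i(\Omega\otimes S/d_\sigma(I))
\]
induced on Koszul homology by the chain map $\Omega\otimes S/I\to\Omega\otimes S/d_\sigma(I)$ coming from the surjection $S/I\to S/d_\sigma(I)$. Hence it is enough to show that a $K$-basis of $H_i(\Omega\otimes S/I)$ is sent to zero.

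Next I would invoke Theorem~\ref{cycle} (applied with the $\sigma$-ordering, so that the operators appearing are the $d_\sigma^r$) to obtain such a basis, namely the classes $[\bar z_{ij}]$ with $j=1,\ldots,b_i$. The key observation concerns the first factor in each summand of $z_{ij}$. Since $F_0=S$ and $\Coker\delta_1=R=S/I$, the entries $\alpha^{(1)}_{j_0 j_1}$ of the first differential $\delta_1$ are exactly a system of generators of $I$; as $d_\sigma(I)$ is independent of the chosen generators, we may use these. Consequently every summand of $z_{ij}$ contains a factor of the form $d_\sigma^{r_1}(\alpha^{(1)}_{j_0 j_1})$, which by definition lies in $d_\sigma(I)$.

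It follows that, viewing $z_{ij}$ as an element of the free module $\Omega_i$, all of its coefficients belong to $d_\sigma(I)$. Therefore the image of $z_{ij}$ in $\Omega_i\otimes S/d_\sigma(I)$ is zero, and hence so is its homology class. Thus the chosen basis of $H_i(\Omega\otimes S/I)$ maps to $0$, and the induced map on $\Tor$ vanishes for every $i\geq 1$, as claimed.

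The only step I expect to require genuine care is the naturality asserted in the first paragraph: one must check that the functorial isomorphism $\Tor^S_i(K,-)\cong H_i(\Omega\otimes -)$ does identify the $\Tor$-map induced by $S/I\to S/d_\sigma(I)$ with the evident map on Koszul homology. This is standard homological algebra, but it is the hinge of the whole argument. By contrast I foresee no real difficulty arising from the permutation $\sigma$, since every ingredient of Section~1---the operators $d^r$, the complex $\Omega$, and Theorem~\ref{cycle}---is set up for an arbitrary fixed ordering of the variables; running it for the $\sigma$-ordering simply replaces $d(I)$ by $d_\sigma(I)$ and yields the general statement at once.
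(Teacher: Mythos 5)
Your proposal is correct and follows essentially the same route as the paper: both reduce via the functorial isomorphism $\Tor^S_i(K,-)\cong H_i(\Omega\otimes -)$ to the Koszul homology map, and then observe that the basis cycles $\bar z_{ij}$ of Theorem~\ref{cycle} (constructed with respect to the $\sigma$-ordering) have all coefficients in $d_\sigma(I)$, so they die already as elements of $\Omega_i\otimes S/d_\sigma(I)$. Your additional remarks---that the entries $\alpha^{(1)}_{j_0j_1}$ of $\delta_1$ generate $I$, that $d_\sigma(I)$ is independent of the choice of generators, and that naturality of the identification must be checked---merely make explicit what the paper leaves implicit.
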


\begin{proof} We first notice that for any $S$-module $M$ the Koszul homology $H_i(\Omega;M)$ is functorially isomorphic to $\Tor^S_i(K,M)$. Thus it suffice to show that the map $H_i(\Omega;S/I)\to H_i(\Omega;S/d_\sigma(I))$ is the zero map for all $i\geq 1$. It is enough to show this for the basis $[z_{ij}]$ elements  of $H_i(\Omega;S/I)$ as given in Theorem~\ref{cycle}. These cycles have coefficients in $d_\sigma(I)$ and hence  their image, already in $\Omega\tensor S/d_\sigma(I)$, is zero.
\end{proof}

Now combining Theorem~\ref{rasoul} with Corollary~\ref{key} we obtain

\begin{Theorem}
\label{criterion}
Let $I\subset S$ be a proper ideal such that $d_\sigma(I)^2\subset I$ for some permutation $\sigma$. Then $I$ is  a Golod ideal.
\end{Theorem}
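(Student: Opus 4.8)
The plan is to combine the two auxiliary results that immediately precede the theorem, namely the Golod criterion of Theorem~\ref{rasoul} and Corollary~\ref{key}, which together hand us the statement almost for free. The strategy is to set $J=d_\sigma(I)$ and verify that the pair $I\subset J$ satisfies the hypotheses of Theorem~\ref{rasoul}.

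First I would observe that $I\subset d_\sigma(I)$. For the unpermuted operators this is exactly rule (i), which gives $f=d^1(f)x_1+\cdots+d^n(f)x_n$, so every generator $f_j$ of $I$ lies in the ideal generated by the $d^i(f_j)$; applying the relabeling $\sigma$ preserves this containment, so indeed $I\subset d_\sigma(I)=:J$. Thus $J$ is a genuine ideal sitting between $I$ and $S$, and the inclusion $I\subset J$ is the one inducing the surjection $S/I\to S/J=S/d_\sigma(I)$.

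Next I would invoke the hypothesis $d_\sigma(I)^2\subset I$, which is precisely the condition $J^2\subset I$ required by Theorem~\ref{rasoul}. The remaining hypothesis of that theorem is that the natural maps $\Tor_i^S(K,S/I)\to\Tor_i^S(K,S/J)$ vanish for all $i\geq 1$; but this is exactly the content of Corollary~\ref{key} applied to the permutation $\sigma$. Feeding these two facts into Theorem~\ref{rasoul} yields that $I$ is Golod, which is the assertion.

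I do not expect any genuine obstacle here, since the theorem is a formal corollary of the machinery already established: the real work was done in proving Corollary~\ref{key} (which rests on the explicit cycle description of Theorem~\ref{cycle}) and in the criterion of Theorem~\ref{rasoul}. The only point requiring a moment's care is confirming the containment $I\subset d_\sigma(I)$ so that $S/I\to S/d_\sigma(I)$ is genuinely a surjection of the expected kind, and checking that the permuted operators $d_\sigma^i$ behave compatibly with $\sigma$; both follow directly from the definitions and rule (i).
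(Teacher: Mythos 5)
Your proposal is correct and is exactly the paper's argument: the paper derives Theorem~\ref{criterion} by combining Theorem~\ref{rasoul} with Corollary~\ref{key}, just as you do, with $J=d_\sigma(I)$. Your extra verification of $I\subset d_\sigma(I)$ via rule (i) is a sound (and welcome) filling-in of a detail the paper only notes in passing.
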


We say that $I$ is {\em $d_\sigma$-Golod}, if $I$ satisfies the condition of the theorem, and simply say that $I$ {\em $d$-Golod}, if $I$ is $d_\sigma$-Golod for $\sigma=\id$. Finally we say that $I$ is {\em strongly $d$-Golod}, if $I$ is $d_\sigma$-Golod for all permutations $\sigma$ of the set $[n]=\{1,2,\ldots,n\}$.

\medskip
For monomial ideals $d(I)$ can be easily computed. If $u$ is a
monomial and $r$ is the smallest integer such that $x_r$ divides
$u$, then $d^r(u)=u/x_r$ and $d^i(u)=0$ for all $i\neq r$. Thus, for
example, if $I=(x_1x_2, x_2^2)$, then $d(I)=(x_2)$, and
$d_\sigma(I)=(x_1,x_2)$ for $\sigma$ the permutation $\sigma$ with
$\sigma(1)=2$ and $\sigma(2)=1$.

\medskip
Obviously, one has the following implications:
\[
\text{$I$ is strongly $d$-Golod \implies $I$ is $d_\sigma$-Golod \implies $I$ is Golod.}
\]
In the above example, $I$ is $d$-Golod, but not $d_\sigma$-Golod. In particular, $d$-Golod does not imply strongly $d$-Golod. Also, a  Golod ideal is  in general not  $d_\sigma$-Golod for any $\sigma$.  For example, the ideal $I=(x_1x_2)$ is Golod, but not $d_\sigma$-Golod.

On the other hand, if $\chara(K)=0$ and $I$ is a monomial ideal, then $I$ is strongly Golod in  the sense of \cite{HH} if and only if it is  strongly $d$-Golod. This follows from  the remarks at the begin of Section~3 in \cite{HH}, where it is observed that  a monomial ideal $I$ is strongly Golod if and only if for all  monomial generators $u,v\in I$ and all integers $i$  and $j$ with $x_i|u$ and $x_j|v$ it follows that $uv/x_ix_j\in I$. Indeed, it is obvious that strongly Golod implies   strongly $d$-Golod. Conversely, let $u,v\in I$ be  monomials with
$x_i\mid u$ and $x_j\mid v$.  If $x_j\mid u$ or $x_i\mid v$, then  clearly  $u v/x_ix_j\in I$. Suppose now that $x_j\nmid u$ and $x_i\nmid v$.  Then   $i\neq j$,  and  we may assume that $i<j$. Choose any  permutation $\sigma$ of $[n]$  such that $\sigma(1)=i$ and $\sigma(2)=j$. Then   $d_{\sigma}(u)=u/x_i$ and $d_{\sigma}(v)=v/x_j$, and since $I$ is $d_{\sigma}$-Golod we get that  $u v/x_ix_j\in I$.

\section{Applications}

Let $I$ and $J$ be ideals of $S$. The ideal $\Union_{t\geq 1}I\: J^t$ is called the {\em saturation} of $I$ with respect to $J$. For $J=\nn$, this saturation is denoted by $\widetilde{I}$. The $k$th symbolic power of $I$, denoted by $I^{(k)}$, is the saturation of $I^k$ with respect to the ideal which is the intersection of all associated, non-minimal prime ideals of $I^k$.

As a first application we prove a  result  analogue to \cite[Theorem 2.3]{HH}, whose proof follows very much the line of arguments given there.  The new and important fact is that no assumptions on the characteristic of the base field are  made.

\begin{Theorem}\label{golodlist}
Let $I,J\subset S$ be ideals. Assume that $\sigma$ is a permutation
of the integers $[n]$. Then the following  holds:
\begin{enumerate}
\item[{\em (a)}]  If $I$ and $J$ are $d_{\sigma}$-Golod, then $I\cap J$ and $IJ$ are $d_{\sigma}$-Golod.
\item[{\em (b)}] If $I$ and $J$ are $d_{\sigma}$-Golod and $d_{\sigma}(I)d_{\sigma}(J)\subset I+J$, then $I+J$ is
$d_{\sigma}$-Golod.
\item[{\em (c)}]  If $I$ is a strongly $d$-Golod monomial ideal and $J$ is an
arbitrary monomial ideal such that $I:J=I:J^2$, then $I:J$ is
strongly $d$-Golod.
\item[{\em (d)}]  If $I$ is a monomial ideal, then $I^k$, $I^{(k)}$ and
$\widetilde{I^k}$ are strongly $d$-Golod for all $k\geq 2$.
\item[{\em (e)}]  If $I\subset J$ are monomial ideals and $I$ is
$d_{\sigma}$-Golod, then $IJ$ is $d_{\sigma}$-Golod.
\end{enumerate}
\end{Theorem}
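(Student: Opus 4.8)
The plan is to reduce every assertion to the case $\sigma=\id$ and then to exploit two structural facts about the \emph{operator ideal} $d(I)$. Since $d_\sigma$ arises from $d$ via the variable‑permuting automorphism $\phi_\sigma$ of $S$ (with $\phi_\sigma(x_i)=x_{\sigma(i)}$), and $\phi_\sigma$ commutes with sums, intersections, products and colons and preserves monomiality, it suffices to treat $\sigma=\id$ and then apply $\phi_\sigma$. The first fact I would isolate is a \emph{generator‑independence lemma}: for every $f\in I$ and every $r$ one has $d^r(f)\in d(I)$, so that $d(I)$ is generated by all $d^r(f)$ with $f\in I$. This follows from Lemma~\ref{prod}: writing $f=\sum_j g_jf_j$ with $f_j$ generators of $I$, each $d^r(g_jf_j)$ is by Lemma~\ref{prod}(ii) a sum of terms, every one of which carries a factor $d^s(f_j)\in d(I)$, hence lies in $d(I)$.

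From this lemma the inclusions $d(IJ)\subseteq d(I)\,d(J)$, $\;d(I\cap J)\subseteq d(I)\cap d(J)$ and $d(I+J)=d(I)+d(J)$ drop out: the first by applying Lemma~\ref{prod}(ii) to products $fg$ of generators (each resulting term carries a factor from $d(I)$ and one from $d(J)$), the second because $h\in I\cap J$ lies in both $I$ and $J$, and the third by using $G(I)\cup G(J)$ as generators of $I+J$. Now (a) follows, since $d(IJ)^2\subseteq d(I)^2d(J)^2\subseteq IJ$ and $d(I\cap J)^2\subseteq(d(I)\cap d(J))^2\subseteq d(I)^2\cap d(J)^2\subseteq I\cap J$; and (b) follows, since $d(I+J)^2=d(I)^2+d(I)d(J)+d(J)^2\subseteq I+(I+J)+J=I+J$, using the hypothesis $d(I)d(J)\subseteq I+J$. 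These two cases are characteristic‑free and need no monomiality.

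For (c)--(e) I would switch to the combinatorial characterization noted after Theorem~\ref{criterion}: a monomial ideal is strongly $d$‑Golod if and only if $uv/(x_ix_j)\in I$ for all monomials $u,v\in I$ and all $i,j$ with $x_i\mid u$ and $x_j\mid v$. For (c), given monomials $w_1,w_2\in I\:J$ with $x_i\mid w_1$, $x_j\mid w_2$, and any monomials $t,s\in J$, the monomials $w_1t,\,w_2s$ lie in $I$ and are divisible by $x_i,\,x_j$; applying the characterization to $I$ gives $w_1w_2ts/(x_ix_j)\in I$, whence $w_1w_2/(x_ix_j)\in I\:J^2=I\:J$, which is the characterization for $I\:J$. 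For (d) one first checks that if $u\in I^k$ is a monomial with $x_i\mid u$ then $u/x_i\in I^{k-1}$, so $uv/(x_ix_j)\in I^{2(k-1)}\subseteq I^k$ for $k\ge2$, giving $I^k$ strongly $d$‑Golod; since $\widetilde{I^k}$ and $I^{(k)}$ are the saturations $I^k\:\nn^t$ and $I^k\:W^t$ for $t\gg0$ (with $W$ the monomial intersection of the embedded primes), and the saturation stabilizes so that $I^k\:\nn^t=I^k\:(\nn^t)^2$ and likewise for $W$, part (c) applied to $I^k$ finishes (d).

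Part (e) I expect to be the \textbf{main obstacle}, because only $I$ (not $J$) is assumed $d_\sigma$‑Golod, so the crude bound $d(IJ)\subseteq d(I)d(J)$ is useless. Instead I would prove the \emph{sharper} monomial inclusion $d(IJ)\subseteq d(I)\,J+I\,d(J)$, obtained by splitting each generator $uv$ ($u\in G(I)$, $v\in G(J)$) according to whether the smallest variable $x_r$ of $uv$ divides $u$ or only $v$: in the first case $d^r(uv)=d^r(u)\,v\in d(I)J$, in the second $d^r(uv)=u\,d^r(v)\in I\,d(J)$. Squaring and invoking $I\subseteq J$ then yields the three terms $d(I)^2J^2\subseteq IJ^2\subseteq IJ$, next $(d(I)J)(I\,d(J))=(IJ)\bigl(d(I)d(J)\bigr)\subseteq IJ$, and finally $I^2d(J)^2\subseteq(IJ)\,d(J)^2\subseteq IJ$, so that $d(IJ)^2\subseteq IJ$. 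The delicate point is that this sharper inclusion genuinely relies on $d^r(uv)=uv/x_r$ for monomials and fails for general ideals, so monomiality in (e) is essential; one must also verify that it is precisely $I\subseteq J$ which converts the two terms involving the uncontrolled factor $d(J)$ back into $IJ$.
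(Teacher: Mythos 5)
Your proposal is correct and takes essentially the same approach as the paper: the same inclusions $d(I\cap J)\subseteq d(I)\cap d(J)$, $d(IJ)\subseteq d(I)d(J)$, $d(I+J)=d(I)+d(J)$ for (a) and (b), the same colon trick $(w_1u/x_i)(w_2v/x_j)\in I$ for (c), the same $d(I^k)\subseteq I^{k-1}$ plus stabilized saturation ($I:L=I:L^2$) for (d), and for (e) the same key observation that the smallest variable of $uv$ divides either $u$ (hence is the smallest variable of $u$) or $v$. Your inclusion $d(IJ)\subseteq d(I)J+I\,d(J)$, squared into three terms handled by $d$-Golodness of $I$, triviality, and $I\subseteq J$, is exactly the paper's four-case elementwise analysis repackaged as ideal arithmetic; no gaps.
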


\begin{proof}
For the proofs of (a) and (b) we may assume that $\sigma=\id$. The proof for a general permutation $\sigma$ is the same.

(a) By assumption, $d(I)^2\subset I$ and $d(J)^2\subset J$. Hence,  since $d(I\sect J)\subset d(I)\sect d(J)$, it follows that
$d(I\sect J)^2\subset d(I)^2\sect d(J)^2\subset I\sect J$, and this shows that $I\sect J$ is $d$-Golod.

Now let $f\in I$ and $g\in J$. Then Lemma~\ref{prod}(ii) implies that $d^r(fg)\in d(I)d(J)$. Therefore,  $d(IJ)\subset d(I)d(J)$, and hence, $d(IJ)^2\subset d(I)^2d(J)^2\subset IJ$, as desired.

(b) Since by assumption, $d(I)^2\subset I$, $d(J)^2\subset J$ and $d(I)d(J)\subset I+J$, it follows that $d(I+J)^2=(d(I)+d(J))^2=d(I)^2 +d(I)d(J)+d(J)^2\subset I+J$, which shows that $I+J$ is $d$-Golod.

(c) Let $w_1,w_2\in I:J$ be two monomials and $i$ and $j$ be integers with $x_i|w_1$ and $x_j|w_2$. We must show that  $w_1w_2/x_ix_j\in I:J$.

 Assume that  $u, v\in J$  are arbitrary. Therefore $w_1u\in I$ and
$w_2v\in I$.  Since $I$ is strongly $d$-Golod, it follows that
$(w_1w_2/x_ix_j)uv= (w_1u/x_i)(w_2v/x_j)\in I$. Hence,
$w_1w_2/x_ix_j\in I:J^2=I:J$.

(d) We first show that $I^k$ is strongly $d$-Golod for all $k\geq
2$. For this we have to show that $I^k$ is  $d_\sigma$-Golod for all
$k\geq 2$ and all permutations $\sigma$ of $[n]$. We prove this for
$\sigma=\id$. The proof for general $\sigma$ is the same. So now let
$w=w_1\cdots w_k$ be a monomial generator of $I^k$ with $w_j\in I$
for $j=1,\ldots k$, and let $r$ be the smallest integer which
divides $w$. We may assume that $x_r$ divides $w_1$. Then $d^i(w)=0$
for $i\neq r$ and $d^r(w)\in I^{k-1}$. It follows that
$d(I^k)\subset I^{k-1}$, and hence $d(I^k)^2\subset
I^{2(k-1)}\subset I^k$.

The remaining statements of (d) now result  from the following more general fact: let $I$ be a  strongly $d$-Golod ideal and $J$ an arbitrary monomial ideal. Then the saturation of $I$ with respect to $J$ is strongly $d$-Golod.

For the proof of this we observe that, due to the fact that $S$ is Noetherian, there exists an integer $t_0$ such that $\Union_{t\geq 0}I:J^t=I:J^s$ for $s\geq t_0$. Thus if $L=J^{t_0}$. Then $\Union_{t\geq 0}I:J^t=I:L=I:L^2$, and the claim follows from (c).

(e)  Let $u_1, u_2\in I$ and $v_1,v_2\in J$ be monomials. Assume that $i$ is the
smallest integer such that $x_{\sigma(i)}\mid u_1v_1$ and $j$ is the
smallest integer such that $x_{\sigma(j)}\mid u_2v_2$. We need to
show that $u_1v_1u_2v_2/x_{\sigma(i)}x_{\sigma(j)}\in IJ.$

If $x_{\sigma(i)}|u_1$ and $x_{\sigma(j)}|u_2$, then $(u_1u_2/x_{\sigma(i)}x_{\sigma(j)})v_1v_2\in IJ$ since $I$ is
$d_{\sigma}$-Golod.  If
$x_{\sigma(i)}|v_1$ and $x_{\sigma(j)}|v_2$, then $u_1u_2(v_1v_2/x_{\sigma(i)}x_{\sigma(j)})\in
IJ$ since $I\subset J$. If $x_{\sigma(i)}|u_1$  and $x_\sigma(j)|v_2$, then $u_1/x_{\sigma(i)}(v_1u_2v_2/x_{\sigma(j)})\in IJ$ since  $I$ is
$d_{\sigma}$-Golod. The case $x_{\sigma(i)}|v_1$  and $x_\sigma(j)|u_2$ is similar.
\end{proof}

Let $I$ be a monomial ideal. We denote by $\bar{I}$ the integral closure of $I$. The following result  and its proof are completely analogue to that of \cite[Proposition 3.1]{HH}, where a similar result  is shown for strongly Golod monomial ideals.

\begin{Proposition}
\label{integral}
Let $I$ be a monomial ideal which is $d_{\sigma}$-Golod.  Then $\bar{I}$ is $d_{\sigma}$-Golod. In
particular,  $\bar{I^k}$ is strongly $d$-Golod for all $k\geq 2$.
\end{Proposition}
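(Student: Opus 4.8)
The plan is to mirror the proof of \cite[Proposition 3.1]{HH} and to translate the assertion about the integral closure into a purely combinatorial statement about exponent vectors, which can be tested against the Newton polyhedron of $I$. First I would reduce to the case $\sigma=\id$. Relabelling the variables according to $\sigma$ is a $K$-algebra automorphism of $S$ which is defined precisely so that it carries $d_\sigma$ to $d$; moreover it commutes with forming integral closures of monomial ideals, and it sends $I$ to $\sigma(I)$ with $\overline{\sigma(I)}=\sigma(\overline{I})$. Hence it suffices to prove the single implication: if a monomial ideal $J$ is $d$-Golod, then $\overline{J}$ is $d$-Golod.

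Next I would record the combinatorics of monomial integral closure: $\overline{I}$ is again a monomial ideal, and a monomial $x^a$ lies in $\overline{I}$ if and only if $a$ lies in the Newton polyhedron $\operatorname{NP}(I)=\con\{\exp(u):u\in I\}+\RR^n_{\geq 0}$, equivalently $(x^a)^N\in I^N$ for some $N\geq 1$. Since $\overline{I}$ is monomial, by the combinatorial description of $d$-Golodness it is $d$-Golod exactly when, for every pair of monomial generators $w_1=x^a,\ w_2=x^b$ of $\overline{I}$, writing $p$ and $q$ for the smallest indices occurring in $a$ and $b$, one has $x^{\,a+b-e_p-e_q}\in\overline{I}$, i.e.\ $a+b-e_p-e_q\in\operatorname{NP}(I)$. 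The hypothesis on $I$ enters through the inclusions coming from $d(I)^2\subseteq I$: for any two monomial generators $g,g'$ of $I$ one has $g+g'-e_{p(g)}-e_{p(g')}\in\exp(I)\subseteq\operatorname{NP}(I)$, where $p(g)$ is the smallest index in $g$. Writing $a=\sum_l\lambda_l g_l+c$ and $b=\sum_m\mu_m h_m+c'$ as points of $\operatorname{NP}(I)$, the minimality of $p$ forces $p(g_l)\geq p$ and $c_t=0$ for $t<p$, and symmetrically for $b$.

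The hard part will be passing from these inclusions to the required membership $a+b-e_p-e_q\in\operatorname{NP}(I)$, and this is the technical heart of the argument. The naive convex combination of the $d$-Golod inclusions over all pairs $(g_l,h_m)$ only yields $a+b-V-W\in\operatorname{NP}(I)$, where $V=\sum_l\lambda_l e_{p(g_l)}$ and $W=\sum_m\mu_m e_{p(h_m)}$ are probability vectors supported on the coordinates $\geq p$, resp.\ $\geq q$. Because $\operatorname{NP}(I)$ is closed only under \emph{increasing} coordinates, one cannot in general replace the ``smeared'' removed mass $V,W$ by the single coordinates $e_p,e_q$; correspondingly the tempting intermediate claim $d(\overline{I})\subseteq\overline{d(I)}$ is already \emph{false} for simple $d$-Golod ideals, so the proof must not be organised around it, and for the same reason a crude pairing/removal argument on the factorisations of $w_1^N,w_2^N$ into generators of $I$ breaks down when the $x_p$-content is concentrated in a single generator factor. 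The point at which the statement is rescued is exactly that $p$ and $q$ are the \emph{smallest} indices in the supports of $a$ and $b$: this minimality is what lets the removed variables ultimately be matched to $e_p$ and $e_q$, and carrying out this bookkeeping carefully — following \cite[Proposition 3.1]{HH} — is where the real work lies.

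Finally, the ``in particular'' statement follows formally from the first part. By Theorem~\ref{golodlist}(d), $I^k$ is strongly $d$-Golod for every $k\geq 2$, hence $d_\sigma$-Golod for every permutation $\sigma$; applying the first part of the proposition once for each $\sigma$ shows that $\overline{I^k}$ is $d_\sigma$-Golod for every $\sigma$, i.e.\ $\overline{I^k}$ is strongly $d$-Golod.
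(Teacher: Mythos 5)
Your proposal is not a proof: it is a correct setup followed by an explicit admission that the decisive step is missing. The reduction to $\sigma=\id$, the reformulation of $d$-Golodness of $\overline{I}$ as the statement that $a+b-e_p-e_q\in\operatorname{NP}(I)$ for monomial generators $x^a,x^b$ of $\overline{I}$ with minimal support indices $p,q$, and the ``in particular'' deduction from Theorem~\ref{golodlist}(d) are all fine. Your negative observations are also sound: the naive convex-combination argument only removes a ``smeared'' mass, and the tempting inclusion $d(\overline{I})\subseteq\overline{d(I)}$ is indeed false for $d$-Golod ideals (for instance $I=(x_1^2x_2^2,\,x_2^2x_3^2)$ is $d$-Golod and $x_1x_2^2x_3$ is a minimal generator of $\overline{I}$, yet $x_2^2x_3\notin\overline{d(I)}$). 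But having ruled out these routes, you declare that matching the removed mass to $e_p$ and $e_q$ ``is where the real work lies'' and defer that work to \cite[Proposition 3.1]{HH}. That matching \emph{is} the mathematical content of the proposition — the paper itself offers nothing but the pointer to \cite{HH} — so a blind attempt that stops there has proved nothing: no argument in your text establishes $a+b-e_p-e_q\in\operatorname{NP}(I)$.

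What actually fills the gap (and what the paper is invoking when it calls the proof ``completely analogue'' to \cite[Proposition 3.1]{HH}) is a factorization argument, not a convex-geometry one. Choose $k$ with $u^k,v^k\in I^k$ and write $u^k=u_1\cdots u_k$, $v^k=v_1\cdots v_k$ with monomials $u_t,v_t\in I$. Every variable occurring in some $u_t$ occurs in $u$, hence has index $\geq p$; so any factor divisible by $x_p$ has $x_p$ as its \emph{minimal} variable, and the $d$-Golod relation (first extended from generators to arbitrary monomials of $I$) applies to it. This is exactly where minimality of $p$ and $q$ enters — a point genuinely absent from the strongly Golod case of \cite{HH}, where any variable may be removed. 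One then removes $x_p^kx_q^k$ from the $2k$ factors by combining two moves: merging two factors via the $d$-Golod relation (two factors become one element of $I$, one minimal variable removed from each), and simply discarding a factor that carries many of the $x_p$'s or $x_q$'s (one factor lost, arbitrarily many variables removed). The slack that makes the bookkeeping close is that only $k$ of the $2k$ factors need to survive: a short induction shows the number of factors lost can be kept $\leq k$, whence $(uv/(x_px_q))^k=u^kv^k/(x_p^kx_q^k)\in I^k$ and $uv/(x_px_q)\in\overline{I}$. None of this — the passage to $k$-th powers, the minimality observation, the merge/discard accounting — appears in your proposal, and without it the statement you yourself isolated as the heart of the proof remains unproved.
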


A monomial ideal $I$ of polynomial ring $S=K[x_1,\ldots,x_n]$ is
called stable if for all monomial $u\in I$ one has $x_i u/x_{m(u)}$
for all $i\leq m(u)$, where $m(u)$ is the largest integer $j$ such
that $x_j$ divides $u$.

\medskip
We  use Theorem~\ref{golodlist} to reprove and generalize a result of Aramova and  \cite{AH} who showed that stable monomial ideals are
Golod.

\begin{Corollary}
Let $I$ be a stable monomial ideal. Then $IJ$ is a Golod ideal
for any $J$ with  $I\subset J$. In particular, $I$ is Golod.
\end{Corollary}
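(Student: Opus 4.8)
The plan is to show that a stable ideal $I$ is $d_\sigma$-Golod for the permutation $\sigma$ that reverses the order of the variables, and then to read off both assertions from the machinery already in place: once $d_\sigma(I)^2\subseteq I$ is established, Theorem~\ref{criterion} gives that $I$ itself is Golod, while Theorem~\ref{golodlist}(e) gives that $IJ$ is $d_\sigma$-Golod, hence Golod, for every monomial ideal $J\supseteq I$. I would first arrange that every minimal generator of $I$ has degree at least $2$, i.e. $I\subseteq\nn^2$: a linear generator $x_i$ would force $d_\sigma(I)=S$, so that $d_\sigma(I)$ fails to be proper and the criterion cannot apply; such an $I$ is degenerate and reduces to fewer variables.

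The decisive point is the choice of $\sigma$. For the identity order the operator $d$ records the \emph{smallest} variable dividing a monomial, whereas stability is a condition on the \emph{largest} variable $x_{m(u)}$; these do not match, and indeed a stable ideal need not be $d$-Golod for $\sigma=\id$. Taking $\sigma$ to be the reversal $\sigma(i)=n+1-i$ repairs the mismatch: one checks directly from the definition that for a monomial $u$ one has $d_\sigma(u)=u/x_{m(u)}$, so that $d_\sigma(I)$ is generated by the monomials $u/x_{m(u)}$ with $u\in G(I)$. Since $I$ is a monomial ideal, $d_\sigma(I)^2$ is generated by the products $(u/x_{m(u)})(v/x_{m(v)})$, and hence $d_\sigma(I)^2\subseteq I$ will follow once I verify, for all $u,v\in G(I)$, that $uv/(x_{m(u)}x_{m(v)})\in I$.

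To prove this membership I would set $p=m(u)$, $q=m(v)$ and assume without loss of generality $p\geq q$. Writing $v/x_q=x_{c_1}\cdots x_{c_{d-1}}$ with $c_1\leq\cdots\leq c_{d-1}$ and $d=\deg v\geq 2$, all indices $c_j$ satisfy $c_j\leq q\leq p$. Applying stability to $u$, i.e. lowering its top variable $x_p$ to the largest variable $x_{c_{d-1}}$ of $v/x_q$ (legitimate since $c_{d-1}\leq p=m(u)$), I obtain $u_1:=x_{c_{d-1}}\,u/x_p\in I$. Then
\[
\frac{uv}{x_px_q}=\frac{u}{x_p}\cdot\frac{v}{x_q}=u_1\cdot\bigl(x_{c_1}\cdots x_{c_{d-2}}\bigr)
\]
lies in $I$ because $u_1\in I$ and $I$ is an ideal (when $d=2$ the trailing factor is $1$ and the product is $u_1$ itself). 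This establishes $d_\sigma(I)^2\subseteq I$, and the Corollary follows from Theorem~\ref{criterion} and Theorem~\ref{golodlist}(e).

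The step I expect to be the real obstacle is not the final calculation, which is a single lowering move, but recognizing that stability forces the reversal permutation rather than the identity: one must align the unique variable that $d_\sigma$ strips off, namely the top variable $x_{m(u)}$, with the variable that the stability condition is permitted to move. Once $\sigma$ is chosen so that these coincide, the one-step argument does all the work, and no hypothesis on $\chara(K)$ is needed.
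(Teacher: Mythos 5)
Your proof is correct and follows exactly the paper's route: choose the reversal permutation $\sigma$, identify $d_\sigma(u)=u/x_{m(u)}$ for monomials, verify $d_\sigma(I)^2\subseteq I$ from stability, and conclude via Theorem~\ref{criterion} together with Theorem~\ref{golodlist}(e). The paper's own proof is a three-line version that simply asserts $d_\sigma(u)d_\sigma(v)\in I$ ``since $I$ is stable''; your one-step lowering computation, and your remark that linear generators must be excluded (i.e.\ $I\subseteq\nn^2$), are precisely the details it leaves implicit.
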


\begin{proof}
Let  $\sigma$ be  the permutation reversing the order of the variables. Then
$d_{\sigma}(u)=u/x_{m(u)}$. Hence for all monomials $u,v\in I$ one has  $d_{\sigma}(u)d_{\sigma}(v)\in I$, since $I$ is stable. It follows that $I$ is $d_\sigma$-Golod. Therefore the desired result follows from
 Theorem~\ref{golodlist}(e).
\end{proof}

In the following proposition we present new family of $d$-Golod ideals which are not necessarily monomial ideals.

\begin{Proposition}
\label{new} Let $J_i\subset K[[x_i,\ldots, x_n]]$  be an ideal for
$i=1,\ldots,n$, and $J\subset S=K[|x_1,\ldots, x_n|]$ be the ideal
generated by $\sum_{i=1}^n x_iJ_i$. Then $J^k$ is $d$-Golod for all
$k\geq 2$.
\end{Proposition}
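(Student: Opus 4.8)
We must show that for the ideal $J=(\sum_{i=1}^n x_iJ_i)$ with $J_i\subset K[[x_i,\ldots,x_n]]$, the power $J^k$ is $d$-Golod for $k\geq 2$; that is, by Theorem~\ref{criterion}, that $d(J^k)^2\subset J^k$.

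**The structure I see.** The defining feature of $J$ is that its generators are of the form $x_i g$ with $g\in J_i\subset S_i=K[[x_i,\ldots,x_n]]$, so each generator of $J$ factors as (a variable) times (something not involving earlier variables). The $d$-operators are built to interact with exactly this kind of factorization: for $g\in S_i$ we have $d^r(g)=0$ for $r<i$ by rule (ii), since $d^r(g)\in S_r$ and $g$ already lies in $S_i\subset S_r$ only when $r\le i$... I need to be careful here. Let me think about what $d^i(x_i g)$ is.

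Let me work out the key local computation. Take a generator $f=x_i g$ with $g\in S_i$. I want to understand $d^r(f)$ for each $r$. Apply the product rule Lemma~\ref{prod}(ii) to $f=x_i\cdot g$; but first note $d^r(x_i)=\delta_{ri}$ (it equals $1$ if $r=i$ and $0$ otherwise, by the definition), and $d^r(g)=0$ for $r<i$ since $g\in S_i$. Expanding, the dominant term should be $d^i(f)=g+ x_i d^i(g)$ together with contributions $d^r(f)=x_i d^r(g)$ for $r>i$, and $d^r(f)=0$ for $r<i$. The crucial point is that every $d^r(f)$ lies in the ideal generated by $\{J_i\}$ inside $S$ — indeed $d^i(f)=g+x_id^i(g)$ with $g\in J_i$ and $d^i(g)\in J_i$ (as $d^i(g)$ is a partial-type operator preserving membership, which I'll need to verify), and $d^r(f)=x_id^r(g)\in x_iJ_i\subset J$ for $r>i$. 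So each $d^r$ of a generator lands in the ideal $J':=(\sum_i J_i)S$, and in fact the higher ones land in $J$ itself.

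**The plan.** First I would reduce to understanding $d(J)$: writing $J=(x_1J_1,\ldots,x_nJ_n)$ and using $d(J^k)\subset J^{k-1}d(J)$ (which follows from the product rule exactly as in the proof of Theorem~\ref{golodlist}(d), since $d^r$ of a product of $k$ generators distributes with one factor differentiated and the rest intact), it suffices to control $d(J)$. Then $d(J^k)^2\subset J^{2(k-1)}d(J)^2$, and since $2(k-1)\geq k$ for $k\geq 2$, we would be done provided $d(J)^2\subset J$, or even just $d(J)^2\subset J^{2-k}\cdot J^k$ read correctly — more precisely, once $d(J)\subset J'$ with $(J')\cdot J\subset J$... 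Let me be cleaner: I would prove $d(J)\subset J + J'$ where $J'=(\sum_iJ_i)$, and then show $d(J)^2\subset J$ by checking that products $J'\cdot J'$, when multiplied into the surviving powers of $J$, return to $J$. The cleanest route is: establish $d^r(x_ig)\in J_i + x_i J_i\subseteq (J_i)+J$ and then verify $d(J)\cdot d(J)\subset J$ directly by multiplying two such generators, using that $(J_i)(J_\ell)$ times two variables $x_ix_\ell$ reassembles into $J\cdot J$-type elements.

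**The main obstacle.** The delicate step is the bookkeeping in the product rule for $d^r$ of a $k$-fold product and the verification that $d^i(g)\in J_i$ for $g\in J_i$, i.e. that the operator $d^i$ preserves the ideal $J_i$ within $S_i$. The former is the same combinatorial expansion carried out in Proposition~\ref{cycle-form}, so I expect it to go through with the same pattern of cross-terms $x_r$ reorganizing into lower/higher index groups. The latter requires noting that $d^i$ restricted to $S_i$ behaves like $d^1$ on the ring $K[[x_i,\ldots,x_n]]$ in its own variables, hence $d^i(g)=\sum_{r\ge i}d^r(g)x_r/\cdots$ stays inside the ideal generated by $g$ only if we track it as an element of $J_i$; the honest statement is $d^i(g)\in (g)\subset J_i$ via rule (i) applied in $S_i$. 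So the real work — and the place a careful argument is needed — is confirming that each $d^r$ applied to a generator keeps us inside $J$ multiplied by the right residual power, so that squaring and absorbing the extra factor of $J$ (available because $k\ge 2$) lands back in $J^k$. I do not expect conceptual difficulty beyond this indexed product-rule bookkeeping.
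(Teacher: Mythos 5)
Your overall shape (compute $d^r$ on generators, then exploit $2(k-1)\geq k$) points in the right direction, but the argument as written rests on incorrect local computations and finally commits to a step that is false. First, the computation of $d^r(x_ig)$ for $g\in S_i$ is wrong: the correct values are $d^i(x_ig)=g$ and $d^r(x_ig)=0$ for \emph{all} $r\neq i$, which follows instantly from the uniqueness of the decomposition in rules (i)--(ii), since $x_ig=g\cdot x_i$ with $g\in S_i$ is already a decomposition of the required form. Your formulas $d^i(f)=g+x_id^i(g)$ and $d^r(f)=x_id^r(g)$ for $r>i$ violate rule (i): for $f=x_1x_2$ (so $i=1$, $g=x_2$) they give $d^1(f)x_1+d^2(f)x_2=2x_1x_2\neq f$. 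Relatedly, the auxiliary fact you invoke, that $d^i(g)\in(g)\subset J_i$ ``via rule (i)'', is false: for $g=x_i+x_{i+1}$ one has $d^i(g)=1$. Also, the reduction $d(J^k)\subset J^{k-1}d(J)$ cannot be justified by letting $d^r$ ``differentiate one factor and keep the rest intact'': Lemma~\ref{prod}(ii) is not a Leibniz rule --- it differentiates \emph{both} factors and multiplies a variable back in, and that variable need not be one of the variables you divided out. The containment is nevertheless true, for the structural reason above: sorting a generator $F=(x_{i_1}g_1)\cdots(x_{i_k}g_k)$ so that $i_1\leq\cdots\leq i_k$, one has $F=x_{i_1}h$ with $h=g_1(x_{i_2}g_2)\cdots(x_{i_k}g_k)\in S_{i_1}$, whence $d^{i_1}(F)=h\in J^{k-1}$ and $d^r(F)=0$ otherwise. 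This is exactly what the paper does, after expanding $J^k=\sum_{a_1+\cdots+a_n=k}x_1^{a_1}\cdots x_n^{a_n}J_1^{a_1}\cdots J_n^{a_n}$ and computing $d^i$ of the resulting generators.

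Second, and more seriously, the step you finally settle on as ``the cleanest route'' --- verifying $d(J)\cdot d(J)\subset J$ --- is false in general, and no bookkeeping will rescue it. Take $n=2$, $J_1=(x_2)\subset K[[x_1,x_2]]$ and $J_2=0$: then $J=(x_1x_2)$, $d(J)=(x_2)$, and $d(J)^2=(x_2^2)\not\subset(x_1x_2)$. This is precisely the paper's own example of a Golod ideal that is not $d_\sigma$-Golod for any $\sigma$, and it is exactly why the proposition assumes $k\geq 2$ and makes no claim about $J$ itself. What you brush past is that nothing about $d(J)^2$ ever needs to be controlled: the corrected computation gives $d(J^k)\subset J^{k-1}$ outright, so $d(J^k)^2\subset J^{2(k-1)}\subset J^k$ because $2(k-1)\geq k$. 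With the computation of $d^r$ on generators fixed as above, this single line completes the proof; as proposed, however, the argument both relies on false identities and aims at a false target.
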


\begin{proof}
Note that for any $k\geq 2$ we have
\[J^k=\sum_{a_1+\cdots+a_n=k}x_1^{a_1}\cdots
x_n^{a_n}J_1^{a_1}\cdots J_n^{a_n}\] where the $a_i$ are non-negative
integers.  By convention,  $x_i^{a_i}J_i^{a_i}=S$ if  $a_i= 0$. Thus we see that
 $J^k$ is generated by elements of the form
 \[
 x_1^{a_1}\cdots
x_n^{a_n} f_1\cdots f_n
\]
with integers $a_i\geq 0$ such  that $a_1+\ldots+a_n=k$ and with  $f_i\in J_i^{a_i}$ for $i=1,\ldots,n$,

One has
\[
d^i(f)=\begin{cases}
      0,  & \text {if  there exists $j<i$ with $a_j>0$ or $a_i=0$}, \\
           x_i^{a_i-1}\cdots x_n^{a_n}
f_i\cdots f_n, & \text{otherwise}.
   \end{cases}.
\]
Hence,  we see that  $d^i(f)d^j(g)\in J^k$ for all $f,g\in J^k$ and all
$i,j$ with $1\leq i,j\leq n$,  and  this shows that $J^k$ is $d$-Golod for all $k\geq
2$.
\end{proof}

\medskip
The last application we have in mind is of more general nature and deals with stretched local rings.  Let $(R,\mm,K)$ be a
Noetherian local ring with maximal ideal $\mm$
and residue field $K$ or a standard graded $K$-algebra with graded
maximal ideal $\mm$. The ring $R$ is said to be {\em stretched} if $\mm^2$ is a
principal ideal.

We set  $n=\dim_K\mm/\mm^2$ and $\tau= \dim_K\Soc(R)$,  where $\Soc(R)=(0:_R\mm)$ is the socle of $R$. Moreover, if $R$ is Artinian, we let $s$ be the largest integer such that $\mm^s\neq 0$. Note the $s+1$ is the Loewy length of $R$.

Stretched local rings have been introduced by Sally \cite{JS}. She showed that the Poincar\'{e} series of
$K$ is a rational function. Indeed,  she showed that
\begin{eqnarray}
\label{sally}
P^R_K(t)=\begin{cases}
      1/(1-nt),  & \text {if $\tau=n$} \\
            1/(1-nt+t^2), & \text{if  $\tau\neq n$}
   \end{cases}
\end{eqnarray}

Very recently in  \cite{S} it was shown  that all finitely generated
modules over a stretched Artinian local ring $R$ have a rational
Poincar\'{e} series with a common denominator by studying the  algebra  structure of the  Koszul
homology of $R$. Among other results they proved in \cite[Theorem~5.4]{S} that $R$ is Golod, if $\tau=h$. By using our methods we give an alternative proof of the result and show

\begin{Theorem}\label{str}
Let $(R,\mm,K)$ be a stretched  local ring or a stretched  standard
graded $K$-algebra. Then $R$ is Golod if one of the following conditions is  satisfied:

\medskip
\noindent
{\em (i)} $R$ is standard graded,   {\em (ii)} $R$ is not Artinian, or {\em (iii)} $R$ is Artinian and  $\tau= n$.
\end{Theorem}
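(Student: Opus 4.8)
The plan is to reduce the whole statement to the standard graded case and there to produce, after a coordinate change, a permutation $\sigma$ for which the defining ideal $I$ satisfies $d_\sigma(I)^2\subseteq I$, so that Golodness drops out of Theorem~\ref{criterion}. Two preliminary reductions make this legitimate. First, the graded Betti numbers of $R$ over $S$ and the Betti numbers of $K$ are unchanged under an extension $K\subseteq K'$ of the base field, so Golodness is insensitive to such an extension and I may assume $K$ is algebraically closed (hence infinite, so generic linear changes of variables are available). Second, for a local ring I would pass to the associated graded ring $\gr_{\mm}(R)$: since $\mm^2$ is principal, the ideal $(\gr_{\mm}R)_{\geq 2}$ is principal as well, so $\gr_{\mm}R$ is a standard graded stretched $K$-algebra, and it is a standard fact that Golodness of $\gr_{\mm}R$ implies Golodness of $R$. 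Thus it suffices to prove the result when $R=S/I$ is standard graded with $S=K[x_1,\dots,x_n]$ and $I\subseteq\nn^2$, provided I check in cases (ii) and (iii) that the hypothesis survives the passage to $\gr_{\mm}R$.

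Next I would unwind the structure of a standard graded stretched $R$. Principality of $\mm^2$ forces the Hilbert function $(1,n,1,1,\dots)$, so $\dim_K R_i\le 1$ for $i\ge 2$, and the multiplication $R_1\times R_1\to R_2\cong K$ is a single symmetric bilinear form $B$ of some rank $\rho$. Writing $\bar x_i\bar x_j=B_{ij}\bar w$ and $\bar x_i\bar w=c_i\bar w_3$, the associativity identities $B_{ij}c_k=B_{ik}c_j=B_{jk}c_i$ show that $\rho\ge 2$ forces $c=0$, i.e. $\mm^3=0$, whence $\tau=n-\rho+1<n$. Consequently in case (ii) ($R$ not Artinian) and in case (iii) ($\tau=n$) one necessarily has $\rho=1$. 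When $\rho=1$ the form $B$ equals $\phi^2$ for a linear form $\phi$; relabelling so that $\phi=x_n$, I get $\bar x_i\bar x_j=0$ for $(i,j)\neq(n,n)$ while $\bar x_n^2$ spans $R_2$, and chasing the higher graded pieces shows that $I$ becomes the \emph{monomial} ideal $I=(x_ix_j:(i,j)\neq(n,n))$, with the single extra generator $x_n^{s+1}$ adjoined when $R$ is Artinian of socle degree $s$.

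With this normal form in hand I would take $\sigma$ to be any permutation placing $x_n$ first. For a monomial $u$ one has $d_\sigma(u)=u/x_{\sigma(r)}$, where $x_{\sigma(r)}$ is the $\sigma$-smallest variable dividing $u$; since $x_n$ comes first, the generators $x_ix_n$ yield $x_i$ rather than $x_n$, and one computes $d_\sigma(I)=(x_1,\dots,x_{n-1})$ in the non-Artinian case and $d_\sigma(I)=(x_1,\dots,x_{n-1},x_n^{s})$ in the Artinian case, with $x_n$ itself never produced. In either case $d_\sigma(I)^2$ is generated by the $x_ix_j$ with $i,j<n$ (which lie in $I$), by the $x_ix_n^{s}=(x_ix_n)x_n^{s-1}$ with $i<n$ (again in $I$), and by $x_n^{2s}$, a multiple of $x_n^{s+1}\in I$ because $s\ge 1$. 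Hence $d_\sigma(I)^2\subseteq I$, so $I$ is $d_\sigma$-Golod and Theorem~\ref{criterion} gives that $R$ is Golod.

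I expect the genuine difficulty to lie in two places. The first is making the coordinate normalization characteristic-free: reducing $B$ to a single square is exactly where the rank-one conclusion (forced by (ii)/(iii)) is indispensable, and the characteristic $2$ case is handled precisely because a rank-one symmetric form is always $\phi^2$, so no diagonalization of a higher-rank form is needed. The second, more delicate point is the descent from the graded to the local situation in cases (ii) and (iii): one must verify that $\gr_{\mm}(R)$ genuinely remains stretched with $\rho=1$, i.e. that non-Artinianity (for (ii)) and the extremal socle condition $\tau=n$ (for (iii)) are not lost on passing to the associated graded ring. For (ii) this is clear since $\dim\gr_{\mm}R=\dim R\ge 1$; for (iii) one uses that $\dim_K\Soc(\gr_{\mm}R)\ge\dim_K\Soc(R)=n$ together with $\dim_K\Soc(\gr_{\mm}R)=n-\rho+1\le n$ to force $\rho=1$, and confirming this socle inequality is the step I would treat most carefully.
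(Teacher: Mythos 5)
Your treatment of case (i) is essentially the paper's argument: after a base field extension you bring $I$ into the monomial normal form of Remark~\ref{relations} and check $d_\sigma(I)^2\subseteq I$ for the order $x_n,x_1,\ldots,x_{n-1}$, which is exactly what the paper does via Theorem~\ref{criterion}; your route to the normal form (rank of the multiplication form $B$ on $R_1$) merely replaces the paper's induction using \cite[Lemma 2.8]{CRV}, and it shares with the paper the limitation of only covering the rank-one situation (the paper hides this in the hypothesis $s\geq 3$ of Lemma~\ref{needed}). The fatal gap is in cases (ii) and (iii): your claim that ``it is a standard fact that Golodness of $\gr_{\mm}(R)$ implies Golodness of $R$'' is not a standard fact --- it is false. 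Golodness of $G=\gr_{\mm}(R)$ says that $P^{G}_K(t)$ attains the bound $(1+t)^n/(1-t(P^{S}_{G}(t)-1))$, whereas Golodness of $R$ requires attaining $(1+t)^n/(1-t(P^{S}_{R}(t)-1))$; since one only has the coefficientwise inequality $P^S_R(t)\leq P^S_{G}(t)$, the Golod bound for $G$ lies \emph{above} Serre's bound for $R$, so no implication follows. Concretely, take $R=K[[x,y]]/(xy,\,x^2-y^{100})$: this is a complete intersection of codimension $2$, hence not Golod, yet $\gr_{\mm}(R)=K[x,y]/(x^2,xy,y^{101})$ is a stretched standard graded algebra in the normal form of Remark~\ref{relations} and therefore Golod (it is an instance of case (i)). So the reduction on which your proof of (ii) and (iii) rests collapses.

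What the paper does instead --- and this is the idea missing from your proposal --- is to upgrade ``$G(R)$ is Golod'' to ``$R$ is Golod'' by proving the \emph{exact equality} $P^R_K(t)=P^{G(R)}_K(t)$. In case (ii) this equality comes from Koszulness of $G(R)$ (it is defined by quadratic monomials after the coordinate change, so Fr\"oberg \cite{Fr1} applies) combined with Fr\"oberg's theorem \cite{Fr} relating $P^R_K$ and $P^{G(R)}_K$; in case (iii) it comes from Sally's formula (\ref{sally}), since $\tau=n$ holds for $R$ by hypothesis and for $G(R)$ automatically by Lemma~\ref{needed}, so both series equal $1/(1-nt)$. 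With this equality in hand one sandwiches
\[
P^R_K(t)=P^{G(R)}_K(t)=\frac{(1+t)^n}{1-t\bigl(P^{S}_{G(R)}(t)-1\bigr)}\;\geq\;\frac{(1+t)^n}{1-t\bigl(P^S_R(t)-1\bigr)}\;\geq\; P^R_K(t),
\]
where the first inequality uses $P^S_R(t)\leq P^S_{G(R)}(t)$ coefficientwise and the second is Serre's bound; equality throughout forces $R$ to be Golod. Your side worry about the socle inequality $\dim_K\Soc(\gr_{\mm}R)\geq\dim_K\Soc(R)$ is not the issue (that inequality does hold, by passing to initial forms of socle elements); the real difficulty, which your proof never addresses, is that without an equality of Poincar\'e series the Golod property simply does not descend from the associated graded ring to $R$.
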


The following lemma will be needed for the proof of the theorem.

\begin{Lemma}
\label{needed}
Let $R$ be a stretched  standard graded $K$-algebra with  $n\geq 2$ and
$s\geq 3$ in Artinian case. Then the following holds:
\begin{enumerate}
\item[{\em (i)}] $\tau=n$, if  $R$ is Artinian.
\item[{\em (ii)}]  $\tau=n-1$, if  $R$ is not Artinian.
\end{enumerate}
\end{Lemma}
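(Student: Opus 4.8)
The plan is to read off both statements from the Hilbert function and the socle of $R$, both of which the stretched hypothesis pins down almost completely. Since $R$ is standard graded, $\mm=R_{\ge 1}$ and $\mm^2=R_{\ge 2}$, and the minimal generators of $\mm^2$ lie in $R_2$; hence $\mm^2$ being principal forces $\dim_K R_2\le 1$. Under both hypotheses $\mm^2\ne 0$ (in the Artinian case because $s\ge 3$, in the non-Artinian case because $\mm^2=0$ would make $R$ finite-dimensional, hence Artinian), so $\dim_K R_2=1$; write $R_2=Kw$. By Macaulay's theorem $\dim_K R_i\le 1$ for all $i\ge 2$, and since $R$ is standard graded its Hilbert function is $1,n,1,1,\dots$, the $1$'s running up to degree $s$ in the Artinian case and continuing indefinitely otherwise. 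In particular $R_3\ne 0$ in both cases.

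Next I would compute the socle $\Soc(R)=(0:_R\mm)$, a homogeneous ideal, degree by degree. Its degree-$0$ component is $0$. For each $i\ge 2$ with $R_{i+1}\ne 0$ the space $R_i=Ku_i$ is one-dimensional and $R_1u_i=R_{i+1}\ne 0$, so $u_i\notin\Soc(R)$ and $\Soc(R)_i=0$; this kills every middle degree. In the Artinian case the top degree contributes $\Soc(R)_s=R_s$, which is one-dimensional. Finally, $\Soc(R)_1=\{x\in R_1:xR_1=0\}$ is exactly the radical of the symmetric bilinear form $\beta\colon R_1\times R_1\to R_2\cong K$ defined by $xy=\beta(x,y)w$, so $\dim_K\Soc(R)_1=n-\rank\beta$. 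Everything thus reduces to computing $\rank\beta$.

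The heart of the argument, and the step I expect to be the main obstacle, is to prove $\rank\beta=1$; here I would use associativity together with $R_3\ne 0$. Let $\phi\colon R_1\to R_3$ be $\phi(x)=xw$; it surjects onto $R_3=R_1R_2$, so $\phi\ne 0$, and $\beta\ne 0$ as well since $R_2=R_1R_1\ne 0$. Expanding $(xy)z=x(yz)$ and using $R_2=Kw$ gives
\[
\beta(y,z)\,\phi(x)=\beta(x,y)\,\phi(z)\qquad\text{for all }x,y,z\in R_1.
\]
Choosing $y_0,z_0$ with $\beta(y_0,z_0)\ne 0$ yields $\phi(x)=\beta(x,y_0)\,u$ for the fixed nonzero element $u=\beta(y_0,z_0)^{-1}\phi(z_0)\in R_3$; as $R_3$ is one-dimensional, repeating this for an arbitrary $y$ shows that every functional $\beta(-,y)$ is a scalar multiple of $\beta(-,y_0)$. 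Hence the image of $y\mapsto\beta(-,y)$ is one-dimensional, i.e.\ $\rank\beta=1$, and so $\dim_K\Soc(R)_1=n-1$. Using this relation instead of diagonalizing the associated quadratic form is what keeps the argument valid in every characteristic.

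Finally I would total up the socle. In the Artinian case with $s\ge 3$, degrees $1$ and $s$ are distinct and are the only ones contributing, giving $\tau=(n-1)+1=n$, which is (i). In the non-Artinian case all $R_i$ are nonzero, so there is no top-degree contribution and only degree $1$ survives, giving $\tau=n-1$, which is (ii). The hypothesis $s\ge 3$ is used precisely to guarantee $R_3\ne 0$ (whence $\rank\beta=1$) and to keep degree $1$ apart from the top degree; for $s=2$ one can have $\rank\beta=2$, and then $\tau=n$ already fails.
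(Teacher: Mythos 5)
Your proof is correct, and it reaches the crucial fact $\dim_K(\Soc(R)\cap R_1)=n-1$ by a genuinely different route than the paper. Both arguments begin the same way: the socle is homogeneous, it is concentrated in degree $1$ plus the top degree $s$ (in the Artinian case), because $\dim_K R_i\le 1$ for $i\ge 2$ and $R_1R_i=R_{i+1}\neq 0$ kills the middle degrees. For the degree-one count, however, the paper first extends the base field to an algebraically closed one (checking that this changes neither the Hilbert function nor the socle dimension) and then inducts on $n$: the base case $n=2$ invokes the Conca--Rossi--Valla lemma \cite[Lemma 2.8]{CRV} to produce a nonzero linear form $x_1$ with $x_1^2=0$, the hypothesis $s\ge 3$ upgrades this to $x_1R_1=0$, and the inductive step splits off $Kx_1$ and passes to $R'=K[R_1']$, accumulating socle linear forms one at a time. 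You instead compute $\dim_K\Soc(R)_1=n-\rank\beta$ for the multiplication pairing $\beta:R_1\times R_1\to R_2$ and prove $\rank\beta=1$ directly from the associativity identity $\beta(x,y)\phi(z)=\beta(y,z)\phi(x)$, where $\phi:R_1\to R_3$ is multiplication by the generator of $R_2$; the hypothesis $s\ge 3$ enters only as $R_3\neq 0$, i.e.\ $\phi\neq 0$. Your route buys self-containedness: no base field extension, no induction, no appeal to an external lemma, and manifest validity in every characteristic (you never diagonalize the quadratic form, only use linear-algebra rank). What the paper's constructive induction buys is an explicit coordinate system of socle linear forms $x_1,\dots,x_{n-1}$, which is exactly what Remark~\ref{relations} and the proof of Theorem~\ref{str} consume downstream; but your argument produces the same data (a basis of the radical of $\beta$ together with any complementary linear form), so nothing is lost. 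Your closing observation that $s=2$ is a genuine exception (where $\rank\beta$ can be $2$ and (i) fails) correctly explains the role of the hypothesis $s\ge 3$, which the paper leaves implicit.
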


\begin{proof}
Since $R$ is
standard graded, $R_{1}R_{i}=R_{i+1}\neq 0$ for $2\leq i< s$ if $R$ is
Artinian,  and $R_{1}R_{i}=R_{i+1}\neq 0$ for all $i\geq 2$ if  $R$ is
not Artinian. Since $\dim_K R_i=1$ for all $i\geq 2$ with $R_i\neq 0$, it follows  that $R_s\varsubsetneq \Soc(R)\varsubsetneq R_1\dirsum R_s$ if  $R$ is Artinian, and $\Soc(R)\varsubsetneq R_1$ if $R$ is not
Artinian. Thus in order to prove (i) and (ii)  we must show that $\dim \Soc(R)\sect R_1=h-1$.

After an extension of the base field we may assume that $K$ is algebraically closed. Indeed, a base field extnesion does not change the Hilbert function, not does it change the socle dimension of the $R$.

We proceed by induction on $n$. We first assume that $n=2$.
Since $\dim_K
R_{1}> \dim_K R_{2}$  and since $K$ is algebraically closed,  \cite[Lemma 2.8]{CRV} implies that  there exists a
non-zero linear form $x_1$ such that $x_1^{2}=0$. Assume that $x_1R_1\neq 0$. Then $R_2=x_1R_1$, and $R_3=R_1R_2=x_1R_1^2=x_1^2R_1=0$, contradicting the assumption that $s\geq 3$. Thus $x_1R_1=0$, and hence $x_1\in \Soc(R)$, and the assertion follows for $n=2$. Now let $n>2$, and let   $R_1'$ be a $K$-linear subspace of $R_1$ such that $R_1'\dirsum Kx_1=R_1$. Then it follows that $(R_1')^2=R_1^2$. Therefore the  standard graded $K$-algebra $R'=K[R_1']$ is a stretched $K$-algebra of embedding dimension $n-1$. By induction hypothesis, $\dim_K \Soc(R')\sect R_1'=n-2$. Let $x_2,\ldots,x_{n-1}$ span the $K$-vector space $\Soc(R')\sect R_1'$. These elements $x_i$  are also socle elements of $R$ and together with $x_1$, they span a vector space of dimension $n-1$, as desired.
\end{proof}

\begin{Remark}
\label{relations}
Suppose that $R=S/I$ is  a stretched  standard graded $K$-algebra,  where $S=K[x_1,\ldots,x_n]$ is the polynomial ring with $K$ is an algebraically closed field,  and where $I\subset \nn^2$. The proof of Lemma~\ref{needed} shows that after a suitable linear change of   coordinates one has that
\begin{enumerate}
\item[{\em (i)}] $I=(x_1,\ldots,x_{n-1})^2+x_n(x_1,\ldots,x_{n-1})+(x_n^{s+1})$, if $R$ is Artinian;

\item[{\em (ii)}] $I=(x_1,\ldots,x_{n-1})^2+x_n(x_1,\ldots,x_{n-1})$, if $R$ is not Artinian.
\end{enumerate}
\end{Remark}

\begin{proof}[Proof of Theorem~\ref{str}]
Let us first assume that $R$ is a standard graded $K$-algebra. After a suitable base field extension, which does not affect the Golod property,  we may assume that $I$ is generated as described in Remark~\ref{relations}.  We order the
variables as follows: $x_n,x_1,\ldots,x_{n-1}$, and let $\sigma$ be the corresponding permutation of $[n]$. Then  $d_{\sigma}(I)=(x_1,\ldots,x_{n-1},x_n^{s})$ in the Artinian case,  and
and $d_{\sigma}(I)=(x_1,\ldots,x_{n-1})$ in the non-Artinian case. Clearly
$d_{\sigma}(I)^2\subset I$ in both cases.  Thus in
any case $R$ is a Golod ring. This proves case (i).

In order to prove Golodness of $R$ in the case (ii), we consider the
associated graded ring $G(R)$ of $R$, which, as can be seen from its
Hilbert function, is a standard graded stretched $K$-algebra. We
claim that $G(R)$ is a Koszul algebra.  Koszulness  of a standard
graded $K$-algebra is characterized by the property that
$P^R_K(t)=1/H_R(-t)$,  where $H_R(t)$ denotes the Hilbert series for
$R$. Therefore, it is enough to prove the claim  for a suitable base
extension, because a base extension does not change the Poincar\'{e}
series of a $K$-algebra,  nor does it change its Hilbert series, as
already noticed before. Hence after this base field extension we may
assume that $I$ is generated by quadratic monomials as described  in
Remark~\ref{relations}. By a result of Fr\"oberg \cite{Fr1}, this
implies that $G(R)$ is Koszul. Now by another result of Fr\"oberg
\cite{Fr} it follows that $P^R_K(t)=P^{G(R)}_K(t)$. By case (i),
$G(R)$ is Golod, and hence
\[
P^R_K(t)=P^{G(R)}_K(t)= \frac{(1+t)^n}{1-t(P^S_{G(R)}(t)-1)}\geq
\frac{P^S_K(t)}{1-t(P^S_R(t)-1)}.
\]
The coefficientwise  inequality in these formulas follows from the
well-known fact that there is the  coefficientwise inequality
$P^S_R(t)\le P^S_{G}(R)(t)$.  Since the opposite inequality
$P^S_K(t)/(1-t(P^S_R(t)-1)\geq P^R_K(t)$ always holds, we have
equality and $R$ is Golod.

Finally suppose that (iii) is satisfied. Then $G(R)$ is a   stretched Artinian $K$-algebra, and hence by Lemma~\ref{needed} we have  $\tau=n$ for $G(R)$.  By our assumption, $\tau=n$ also for $R$. Thus (\ref{sally}) implies that  $P^R_k(t)=P^{G(R)}_k(t)$. As in (ii) it follows from this equation  that $R$ is Golod, since $G(R)$ is Golod.
 \end{proof}

\end{document}